\documentclass[10pt]{amsart} 
\usepackage{amsmath,amssymb,bm, color}

\usepackage{amsmath}
\usepackage{graphicx,float} 

\usepackage{mathrsfs}
\usepackage{bbm}
\usepackage{bm}
\usepackage{amsfonts,amssymb}
\usepackage{multirow}
\usepackage{lineno}
\usepackage{color}

\numberwithin{equation}{section}
 \newtheorem{theorem}{Theorem}[section]
 \newtheorem{lemma}[theorem]{Lemma}

\def\3bar{{|\hspace{-.02in}|\hspace{-.02in}|}}

\def\T{{\mathcal{T}}}

\def\cal#1{{\mathcal #1}}

\def\bw{{\mathbf{w}}}
\def\bx{{\mathbf{x}}}
\def\bf{{\mathbf{f}}}
\def\bu{{\mathbf{u}}}
\def\bv{{\mathbf{v}}}
\def\bn{{\mathbf{n}}}

\def\bvarphi{{\boldsymbol{\varphi}}}
\def\bbeta{{\boldsymbol{\beta}}}

\def\bsigma{{\boldsymbol{\sigma}}}

\newtheorem{algorithm}{Auto-Stabilized WG Algorithm}[section]

\numberwithin{equation}{section}

\def\3bar{{|\hspace{-.02in}|\hspace{-.02in}|}}
\def\p#1{\begin{pmatrix}#1\end{pmatrix}}
 \def\cal#1{\mathcal{#1}}
\def\ad#1{\begin{aligned}#1\end{aligned}}  \def\b#1{\mathbf{#1}} 
\def\a#1{\begin{align*}#1\end{align*}} \def\an#1{\begin{align}#1\end{align}} \def\t#1{\hbox{#1}}

\begin{document}

\title []
 {Auto-Stabilized Weak Galerkin Finite Element Methods for Biot's consolidation model on Non-Convex Polytopal Meshes}

  \author {Chunmei Wang}
  \address{Department of Mathematics, University of Florida, Gainesville, FL 32611, USA. }
  \email{chunmei.wang@ufl.edu}

\author {Shangyou Zhang}
\address{Department of Mathematical Sciences,  University of Delaware, Newark, DE 19716, USA}   \email{szhang@udel.edu}

\begin{abstract}
This paper presents an auto-stabilized weak Galerkin (WG) finite element method for the Biot's consolidation model within the classical displacement-pressure two-field formulation. Unlike traditional WG approaches, the proposed scheme achieves numerical stability without the requirement of  traditional  stabilizers. Spatial discretization is performed using weak Galerkin finite elements for both displacement and pressure approximations, while a backward Euler scheme is employed for temporal discretization to ensure a fully implicit and stable formulation. 
We establish the well-posedness of the resulting linear system at each time step and provide a rigorous error analysis, deriving optimal-order convergence. A significant merit of this WG scheme is its flexibility on general shape-regular polytopal meshes, including those with non-convex geometries. By utilizing bubble functions as a primary analytical tool, the method produces stable, oscillation-free pressure approximations without specialized treatment. Numerical experiments are presented to validate the theoretical convergence rates and demonstrate the computational efficiency and robustness of the auto-stabilized formulation.
\end{abstract}
\keywords{weak Galerkin, Biot's consolidation, auto-stabilized,    weak gradient, weak divergence, weak strain tensor, bubble functions,  non-convex, polytopal meshes}

\subjclass[2010]{65N30, 65N15, 65N12, 65N20}
 
%\linenumbers
\maketitle
 
\section{Introduction} 

Based on Biot’s theoretical framework, poroelasticity provides a coupled mathematical description of fluid flow within deformable porous media. This modeling is indispensable to various geophysical sectors, including   the detection of gas hydrates. While early investigations into this coupling can be traced back to the one-dimensional studies of Terzaghi \cite{1}, the generalized mathematical theory was established by Maurice Biot through several foundational publications \cite{2,3}. Biot’s model has since been extensively employed for the quantitative and qualitative analysis of poroelastic phenomena. In recent years, the numerical simulation and theoretical study of Biot’s equations have seen a significant resurgence, driven by diverse applications in biomechanics, medical science, petroleum engineering, and food processing.

In this study, we consider a porous medium that is linearly elastic, homogeneous, and isotropic, saturated by an incompressible Newtonian fluid. Under these assumptions, the quasi-static Biot model is formulated as a time-dependent system of partial differential equations (PDEs) for the solid displacement vector, \ $\b u$, \ and the fluid pressure, $p$, defined on a domain $\Omega\subset \mathbb R^d$, $1\leq d\leq 3$ with a regular boundary $\Gamma$:
\begin{eqnarray}\label{model1}
    -\nabla\cdot \bsigma +\nabla p&=& \bf, \\
    -\nabla\cdot \partial_t \bu+\nabla\cdot( K\nabla p)&=&g, \label{model2}
\end{eqnarray}
where $\bsigma=2\mu \epsilon (\bu)+\lambda \nabla\cdot \bu I$ and $\epsilon(\bu)=\frac{1}{2}(\nabla \bu+ \nabla \bu^T)$ represent the effective stress and strain tensors, respectively. The coefficients $\lambda$ and $\mu$ denote the Lamé parameters, $K$ is the hydraulic conductivity tensor, $\bf$ is the density of body forces, and the source term $g$ accounts for fluid extraction or injection. The partial derivative with respect to time for the displacement is denoted by $\partial_t \bu$.

To ensure a well-posed coupled system, specific boundary and initial conditions are required. A standard configuration of boundary conditions is defined as:
$$\bu=0, \qquad K\nabla p\cdot \bn=0, \qquad \text{on}\ \Gamma_c, $$
$$\bsigma \cdot \bn=\bbeta, \qquad  p =0, \qquad \text{on}\ \Gamma_t, $$
where $\bn$ is the unit outward normal vector. The boundary $\Gamma$ is partitioned into disjoint subsets $\Gamma_t$ and $\Gamma_c$ of non-null measure such that $\Gamma = \Gamma_t \cup \Gamma_c$. Furthermore, the following incompressibility constraint is imposed at the initial time $t = 0$:
\begin{equation}\label{ini}
   (\nabla \cdot\bu) (\bx, 0)=0, \qquad \bx\in \Omega.
\end{equation}

The mathematical foundations regarding the existence and uniqueness of solutions for Biot’s model were rigorously established by Showalter \cite{4} and Zenisek \cite{5}, while the well-posedness of nonlinear poroelastic variants has been addressed in works such as \cite{6}. Although analytical solutions exist for specific linear poroelastic problems \cite{7,8}, the inherent complexity of practical applications necessitates the use of robust numerical simulations. Among the prevalent discretization strategies, finite difference methods \cite{9, 10, 11} and finite element methods \cite{12, 13, 14, 15, 16} have been extensively documented.

A primary challenge in the numerical simulation of Biot's model is the emergence of nonphysical oscillations in the pressure field \cite{17, 18, 19, 20, 21}. These instabilities typically manifest in regimes characterized by low permeability or exceedingly small time steps at the onset of the consolidation process. Traditionally, such phenomena have been attributed to the violation of the inf-sup condition \cite{22}. Consequently, several studies have focused on inf-sup stable discretizations for displacement and pressure \cite{23, 24,25}. However, research has also indicated that the inf-sup condition alone may not guarantee numerical stability \cite{26,27}. Alternative explanations highlight the role of monotonicity; for instance, adding time-dependent stabilization terms has been shown to produce oscillation-free results using MINI \cite{28} or $P_1$-$P_1$ elements \cite{27}. Beyond standard two-field formulations, multi-field approaches (three-field and four-field) using continuous and discontinuous Galerkin methods have also been explored \cite{21,29, 30, 31, 32, 33}, with recent stabilization techniques proposed for three-field problems \cite{34} and mass-lumping nonconforming methods \cite{35} providing further stability.

The Weak Galerkin (WG) finite element method offers a sophisticated framework for solving  PDEs  by approximating differential operators through  a distribution based approach for piecewise polynomials. By utilizing specifically designed stabilizers, the WG method reduces regularity requirements, demonstrating significant versatility across various model PDEs \cite{wg1, wg2, wg3, wg4, wg5, wg6, wg7, wg8, wg9, wg10, wg11, wg12, wg13, wg14,   wg15, wg16, wg17, wg18, wg19, wg20, wg21, itera, wy3655}. A defining characteristic of the WG framework is its reliance on weak derivatives and weak continuity, granting it the flexibility to maintain stability and accuracy across diverse PDE systems, including previous applications to Biot's model \cite{muhu}.

This paper introduces a simplified, auto-stabilized WG formulation that supports both convex and non-convex polytopal elements. This approach has recently demonstrated success in Poisson \cite{autosecon}, biharmonic \cite{autobihar}, and linear elasticity \cite{autoelas} problems,   Maxwell equations \cite{automaxwell} and  Stokes problem \cite{autostokes}. A key innovation of the current work is the elimination of traditional stabilizers by employing higher-degree polynomials for the computation of discrete weak differential operators. This methodology preserves the global sparsity and size of the stiffness matrix while reducing implementation complexity. By integrating bubble functions as a primary analytical tool, the method extends the utility of WG to non-convex geometries.

We develop this WG method specifically for the Biot system \eqref{model1}-\eqref{model2}, incorporating a locking-free discretization for displacement \cite{wg18}. Combined with appropriately selected WG spaces for pressure, we theoretically demonstrate that the formulation is both locking-free and inf- sup stable using standard polynomials. The well-posedness of the system is established through a backward Euler temporal discretization. We prove optimal-order convergence under standard regularity assumptions and numerically demonstrate that the scheme produces oscillation-free pressure approximations without requiring mass lumping or manually tuned stabilization parameters.

The remainder of this paper is organized as follows. Section 2 defines the weak differential operators. Section 3 introduces the proposed WG formulation for Biot's model. Section 4 establishes the well-posedness of the discrete system at each time step. Section 5, we derive the error equations. 
Section 6 provides a rigorous error analysis and proves optimal-order convergence. Finally, Section 7 presents numerical experiments validating the stability and efficiency of the stabilizer-free scheme.

Throughout this paper, standard Sobolev notation is employed. Let $D$ denote an open, bounded domain in $\mathbb{R}^d$ with a Lipschitz continuous boundary. For any integer $s \geq 0$, the inner product, semi-norm, and norm in $H^s(D)$ are denoted by $(\cdot,\cdot)_{s,D}$, $|\cdot|_{s,D}$, and $\|\cdot\|_{s,D}$, respectively. When $D = \Omega$, the subscript is omitted. For $s=0$, the notations simplify to $(\cdot,\cdot)_D$, $|\cdot|_D$, and $\|\cdot\|_D$.

\section{Weak Operators and Discrete Formulations}

This section reviews the definitions of the weak gradient, weak divergence and weak strain tensor operators, along with their corresponding discrete formulations, as introduced in \cite{wg18, muhu}.

\subsection{Weak Function Spaces}
Consider a polytopal element $T$ with boundary $\partial T$. A weak function on $T$ is defined as a pair $\bv = \{\bv_0, \bv_b\}$, where $\bv_0 \in [L^2(T)]^d$ represents the interior values and $\bv_b \in [L^2(\partial T)]^d$ represents the boundary values. It is important to note that $\bv_b$ is treated as a variable independent of the trace of $\bv_0$ on $\partial T$.

The space of all weak functions on $T$, denoted by $V(T)$, is defined as follows:
\begin{equation*} 
V(T) = \left\{ \bv = \{\bv_0, \bv_b\} : \bv_0 \in [L^2(T)]^d, \bv_b \in [L^{2}(\partial T)]^d \right\}.
\end{equation*}

Similarly, the scalar weak function space $W(T)$ is defined as:
\begin{equation*} 
W(T) = \left\{ q = \{q_0, q_b\} : q_0 \in L^2(T), q_b \in L^{2}(\partial T) \right\}.
\end{equation*}

\subsection{Weak Differential Operators}
The weak gradient $\nabla_w \bv$ is a linear operator mapping $V(T)$ to the dual space of $[H^1(T)]^{d \times d}$. For any $\bv \in V(T)$, $\nabla_w \bv$ is defined as a bounded linear functional satisfying:
\begin{equation*} 
 (\nabla_w\bv, \bvarphi)_T = -(\bv_0, \nabla \cdot \bvarphi)_T + \langle \bv_b, \bvarphi \cdot \bn \rangle_{\partial T}, \quad \forall \bvarphi \in [H^1(T)]^{d \times d},
 \end{equation*}
where $\bn$ denotes the unit outward normal vector to $\partial T$.

The weak divergence $\nabla_w \cdot \bv$ is a linear operator mapping $V(T)$ to the dual space of $H^1(T)$. For $\bv \in V(T)$, the operator is defined such that:
\begin{equation*} 
 (\nabla_w \cdot \bv, w)_T = -(\bv_0, \nabla w)_T + \langle \bv_b \cdot \bn, w \rangle_{\partial T}, \quad \forall w \in H^1(T).
 \end{equation*} 

Furthermore, the weak gradient of a scalar $q \in W(T)$, denoted by $\nabla_w q$, is a linear operator mapping $W(T)$ to the dual space of $[H^1(T)]^d$, satisfying:
 \begin{equation*} 
 (\nabla_w q, \bw)_T = -(q_0, \nabla \cdot \bw)_T + \langle q_b, \bw \cdot \bn \rangle_{\partial T}, \quad \forall \bw \in [H^1(T)]^d.
 \end{equation*} 

\subsection{Discrete Weak Operators}
For any non-negative integer $r \ge 0$, let $P_r(T)$ denote the space of polynomials on $T$ with total degree at most $r$. The discrete weak gradient $\nabla_{w, r, T}\bv$ for $\bv \in V(T)$ is the unique polynomial in $[P_r(T)]^{d \times d}$ satisfying:
\begin{equation}\label{2.4}
(\nabla_{w, r, T}\bv, \bvarphi)_T = -(\bv_0, \nabla \cdot \bvarphi)_T + \langle \bv_b, \bvarphi \cdot \bn \rangle_{\partial T}, \quad \forall \bvarphi \in [P_r(T)]^{d \times d}.
 \end{equation}
If $\bv_0 \in [H^1(T)]^d$, applying integration by parts to the first term on the right-hand side of \eqref{2.4} yields:
 \begin{equation}\label{2.4new}
(\nabla_{w, r, T}\bv, \bvarphi)_T = (\nabla \bv_0, \bvarphi)_T + \langle \bv_b - \bv_0, \bvarphi \cdot \bn \rangle_{\partial T}, \quad \forall \bvarphi \in [P_r(T)]^{d \times d}.
 \end{equation} 

The discrete weak strain tensor is then defined as:
\begin{equation*}
\epsilon_{w, r, T}(\bv) = \frac{1}{2} (\nabla_{w, r, T}\bv + \nabla_{w, r, T} \bv^T).
\end{equation*}

The discrete weak gradient $\nabla_{w, r, T}q$ for $q \in W(T)$ is the unique polynomial in $[P_r(T)]^d$ such that:
 \begin{equation}\label{gradient}
(\nabla_{w, r, T}q, \bw)_T = -(q_0, \nabla \cdot \bw)_T + \langle q_b, \bw \cdot \bn \rangle_{\partial T}, \quad \forall \bw \in [P_r(T)]^{d}.
 \end{equation}
If $q_0 \in H^1(T)$, integration by parts leads to:
 \begin{equation}\label{gradientnew}
(\nabla_{w, r, T}q, \bw)_T = (\nabla q_0, \bw)_T + \langle q_b - q_0, \bw \cdot \bn \rangle_{\partial T}, \quad \forall \bw \in [P_r(T)]^{d}.
 \end{equation} 

Finally, the discrete weak divergence $\nabla_{w, r, T} \cdot \bv$ for $\bv \in V(T)$ is the unique polynomial in $P_r(T)$ satisfying:
 \begin{equation}\label{div}
(\nabla_{w, r, T} \cdot \bv, w)_T = -(\bv_0, \nabla w)_T + \langle \bv_b \cdot \bn, w \rangle_{\partial T}, \quad \forall w \in P_r(T).
 \end{equation}
For $\bv_0 \in [H^1(T)]^d$, this can be reformulated as:
 \begin{equation}\label{divnew}
(\nabla_{w, r, T} \cdot \bv, w)_T = (\nabla \cdot \bv_0, w)_T + \langle (\bv_b - \bv_0) \cdot \bn, w \rangle_{\partial T}, \quad \forall w \in P_r(T).
 \end{equation}

 \section{Auto-Stabilized Weak Galerkin Algorithms} \label{Section:WGFEM}

Let ${\cal T}_h$ denote a finite element partition of the domain $\Omega \subset \mathbb{R}^d$ into polytopal elements, which satisfies the shape regularity conditions established in \cite{wy3655}. Let ${\mathcal E}_h$ be the set of all edges or faces in ${\cal T}_h$, and let ${\mathcal E}_h^0 = {\mathcal E}_h \setminus \partial \Omega$ represent the subset of interior edges or faces. For each element $T \in {\cal T}_h$, let $h_T$ denote its diameter, and define the mesh size $h = \max_{T \in {\cal T}_h} h_T$.

We define the following constrained Sobolev spaces:
\begin{align*}
[H_c^1(\Omega)]^d &= \{ \bv \in [H^1(\Omega)]^d : \bv = 0 \text{ on } \Gamma_c \}, \\
H_t^1(\Omega) &= \{ q \in H^1(\Omega) : q = 0 \text{ on } \Gamma_t \}.
\end{align*}

The weak formulation of \eqref{model1}--\eqref{model2} is stated as follows: Find $\bu \in [H_c^1(\Omega)]^d$ and $p \in H_t^1(\Omega)$ such that
\begin{align*}
2\mu (\epsilon \bu, \epsilon \bv) + \lambda (\nabla \cdot \bu, \nabla \cdot \bv) - (\nabla \cdot \bv, p) &= (\bf, \bv) + (\bbeta, \bv)_{\Gamma_t},  \forall \bv \in [H_c^1(\Omega)]^d, \\
-(\nabla \cdot \partial_t \bu, q) - (K\nabla p, \nabla q) &= (g, q), \quad \forall q \in H_t^1(\Omega).
\end{align*}

\subsection{Weak Finite Element Spaces}
Let $k \geq 1$. To approximate the displacement $\bu$, we introduce the local weak finite element space:
\begin{equation*}
V(T) = \{ \bv = \{\bv_0, \bv_b\} : \bv_0 \in [P_k(T)]^d, \bv_b \in [P_{k}(e)]^d, e \subset \partial T \}.
\end{equation*}
The global weak finite element space $V_h$ is constructed by assembling the local spaces $V(T)$ for all $T \in {\cal T}_h$, while enforcing the continuity of the boundary component $\bv_b$ across the interior interfaces ${\mathcal E}_h^0$:
\begin{equation*}
V_h = \{ \{\bv_0, \bv_b\} : \{\bv_0, \bv_b\}|_T \in V(T), \forall T \in {\cal T}_h \}.
\end{equation*}
The subspace of $V_h$ incorporating the essential boundary conditions on $\Gamma_c$ is defined as:
\begin{equation*}
V_h^c = \{ \bv \in V_h : \bv_b|_{\Gamma_c} = 0 \}.
\end{equation*}

For the pressure field $p$, we define the local weak space:
\begin{equation*}
W(T) = \{ q = \{q_0, q_b\} : q_0 \in P_k(T), q_b \in P_k(\partial T) \}.
\end{equation*}
Similarly, the corresponding global space $W_h$ and its constrained subspace $W_h^t$ are given by:
\begin{align*}
W_h &= \{ q = \{q_0, q_b\} : \{q_0, q_b\}|_T \in W(T), \forall T \in {\cal T}_h \}, \\
W_h^t &= \{ q \in W_h : q_b|_{\Gamma_t} = 0 \}.
\end{align*}

\subsection{Discrete Operators and Bilinear Forms}
For simplicity, we let the discrete weak gradient $\nabla_w \bv$, the discrete weak divergence $\nabla_w \cdot \bv$, and the discrete weak strain tensor $\epsilon_w \bv$ denote the operators $\nabla_{w, r, T}\bv$, $\nabla_{w, r, T} \cdot \bv$, and $\epsilon_{w, r, T}\bv$, respectively, as defined in \eqref{2.4} and \eqref{div}. Specifically, for each $T \in {\cal T}_h$:
\begin{equation*}
(\nabla_w \bv)|_T = \nabla_{w, r, T}(\bv|_T), \quad (\nabla_w \cdot \bv)|_T = \nabla_{w, r, T} \cdot (\bv|_T), \quad (\epsilon_w \bv)|_T = \epsilon_{w, r, T} (\bv|_T).
\end{equation*}
Following the auto-stabilization strategy in \cite{autosecon}, the degree $r$ of the polynomial space is chosen as $r = k-1+2N$ for non-convex elements and $r = k-1+N$ for convex elements, where $N$ denotes the number of faces/edges of the element $T$. Details can be found in \cite{autosecon}.

We define the following bilinear forms:
\begin{align*}
a(\bu, \bv) &= 2\mu (\epsilon_w \bu, \epsilon_w \bv) + \lambda (\nabla_w \cdot \bu, \nabla_w \cdot \bv), \\
c(p, q) &= (K\nabla_w p, \nabla_w q), \\
b(\bv, q) &= (\nabla_w \cdot \bv, q).
\end{align*}

\subsection{Fully Discrete Scheme}
We propose a weak Galerkin method for the Biot's equations, employing the WG method for spatial discretization and the backward Euler scheme for temporal discretization. The full discretization is defined as follows: 
Find $\bu_h^n = \{\bu_0^n, \bu_b^n\} \in V_h^c$ and $p_h^n = \{p_0^n, p_b^n\} \in W_h^t$ such that
\begin{equation} \label{WG2}
\begin{cases}
a(\bu_h^n, \bv_h) - b(\bv_h, p_h^n) = (\bf(t_n), \bv_0) + \langle \bbeta(t_n), \bv_b \rangle_{\Gamma_t}, & \forall \bv_h \in V_h^c, \\
-b(\bar{\partial}_t \bu_h^n, q_h) - c(p_h^n, q_h) = (g(t_n), q_0), & \forall q_h \in W_h^t,
\end{cases}
\end{equation}
where $\bar{\partial}_t \bu_h^n = \frac{\bu_h^n - \bu_h^{n-1}}{\Delta t}$ and $\Delta t$ is the time step size.

At each time step $t_n$, the solution is obtained via the following scheme:
\begin{algorithm}\label{PDWG1}
Find $\bu_h = \{\bu_0, \bu_b\} \in V_h^c$ and $p_h \in W_h^t$ such that
\begin{equation}\label{WG}
\begin{cases}
a(\bu_h, \bv_h) - b(\bv_h, p_h) = (\bf(t_n), \bv_0) + \langle \bbeta, \bv_b \rangle_{\Gamma_t}, & \forall \bv_h \in V_h^c, \\
-b(\bu_h, q_h) - \Delta t c(p_h, q_h) = (\hat{g}, q_0), & \forall q_h \in W_h^t,
\end{cases}
\end{equation}
where the superscript $n$ is omitted for brevity and $\hat{g} = \Delta t g(t_n) - \nabla_w \cdot \bu_h^{n-1}$.
\end{algorithm}

\section{Well-posedness} 

Recall that ${\cal T}_h$ represents a shape-regular finite element partition of the domain $\Omega$. For any element $T \in {\cal T}_h$ and any function $\phi \in H^1(T)$, the following trace inequality holds \cite{wy3655}:
\begin{equation}\label{tracein}
 \|\phi\|^2_{\partial T} \leq C(h_T^{-1}\|\phi\|_T^2+h_T \|\nabla \phi\|_T^2).
\end{equation}
If $\phi$ is a polynomial on $T$, a simplified version of the trace inequality applies \cite{wy3655}:
\begin{equation}\label{trace}
\|\phi\|^2_{\partial T} \leq Ch_T^{-1}\|\phi\|_T^2.
\end{equation}

For any weak function $\bv = \{\bv_0, \bv_b\} \in V_h$, we define the energy norm:
\begin{equation}\label{vhnorm1}
\3bar \bv\3bar_{V_h}= \left(\sum_{T\in {\cal T}_h} 2\mu\|\epsilon_{w} \bv\|_T^2 + \lambda \|\nabla_w \cdot \bv\|_T^2\right)^{\frac{1}{2}},
\end{equation}
and the corresponding discrete $H^1$-semi-norm:
\begin{equation}\label{vhnorm2}
\3bar \bv\3bar_{1,h}= \left(\sum_{T\in {\cal T}_h} 2\mu\|\epsilon \bv_0\|_T^2 + \lambda \|\nabla \cdot \bv_0\|_T^2 + h_T^{-1}\|\bv_0 - \bv_b\|_{\partial T}^2\right)^{\frac{1}{2}}.
\end{equation}

Similarly, for any $q = \{q_0, q_b\} \in W_h$, we define the norm:
\begin{equation}\label{whnorm1}
\3bar q\3bar_{W_h}= \left(\sum_{T\in {\cal T}_h} K\|\nabla_{w} q\|_T^2\right)^{\frac{1}{2}},
\end{equation}
and the discrete $H^1$-semi-norm:
\begin{equation}\label{whnorm2}
\3bar q\3bar_{1,h}= \left(\sum_{T\in {\cal T}_h} K\|\nabla q_0\|_T^2 + h_T^{-1}  \|q_0 - q_b\|_{\partial T}^2\right)^{\frac{1}{2}}.
\end{equation}

% \begin{lemma}\label{norm1} \cite{autostokes}
% For $\bv = \{\bv_0, \bv_b\} \in V_h$, there exists a constant $C$ such that:
% $$
% \|\nabla \bv_0\|_T \leq C\|\nabla_w \bv\|_T.
% $$
% \end{lemma}

% By Lemma \ref{norm1} and the definition of the strain tensor $\epsilon \bv = \frac{1}{2}(\nabla \bv + \nabla \bv^T)$ and $\epsilon_w \bv = \frac{1}{2}(\nabla_w \bv + \nabla_w \bv^T)$, we obtain the following result:
% \begin{lemma}\label{norm11}\cite{autostokes}
% For $\bv = \{\bv_0, \bv_b\} \in V_h$, there exists a constant $C$ such that:
% $$
% \|\epsilon \bv_0\|_T \leq C\|\epsilon_w \bv\|_T.
% $$
% \end{lemma}

\begin{lemma}\cite{autostokes}
There exist positive constants $C_1$ and $C_2$ such that for any $\bv = \{\bv_0, \bv_b\} \in V_h$, the following norm equivalence holds:
\begin{equation}\label{normeq}
 C_1\|\bv\|_{1, h} \leq \3bar \bv\3bar_{V_h} \leq C_2\|\bv\|_{1, h}.
\end{equation}
\end{lemma}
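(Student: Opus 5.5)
We prove the two inequalities in \eqref{normeq} separately and element by element. Here $\|\bv\|_{1,h}$ denotes the discrete semi-norm $\3bar \bv\3bar_{1,h}$ of \eqref{vhnorm2}. The upper bound is routine. The lower bound is where the choice $r=k-1+N$ (convex) or $r=k-1+2N$ (non-convex) and bubble functions are essential, as in \cite{autosecon,autostokes}.

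\textbf{Upper bound.} Fix $T$ and start from \eqref{2.4new}. Since $r\ge k-1$, we have $\nabla\bv_0\in[P_r(T)]^{d\times d}$. Taking $\bvarphi=\nabla_w\bv$ and applying Cauchy--Schwarz and the polynomial trace inequality \eqref{trace} gives
\[
\|\nabla_w\bv\|_T\le \|\nabla\bv_0\|_T+Ch_T^{-1/2}\|\bv_0-\bv_b\|_{\partial T}.
\]
The same argument applies to $\epsilon_w\bv$ by symmetrizing the test function (test with symmetric $\bvarphi$), and to $\nabla_w\cdot\bv$ via \eqref{divnew}. The result is
\[
\|\epsilon_w\bv\|_T\le\|\epsilon\bv_0\|_T+Ch_T^{-1/2}\|\bv_0-\bv_b\|_{\partial T},\qquad
\|\nabla_w\cdot\bv\|_T\le\|\nabla\cdot\bv_0\|_T+Ch_T^{-1/2}\|\bv_0-\bv_b\|_{\partial T}.
\]
Squaring, weighting by $2\mu$ and $\lambda$, and summing over $T$ gives $\3bar\bv\3bar_{V_h}\le C_2\3bar\bv\3bar_{1,h}$. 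The constant depends on $\mu,\lambda$, which are treated as fixed.

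\textbf{Lower bound, first step: the jump term.} The key is to control $h_T^{-1}\|\bv_0-\bv_b\|_{\partial T}^2$ by $\|\nabla_w\bv\|_T^2$. Fix a face $e\subset\partial T$ and let $\varphi_e$ be the face bubble of \cite{autosecon}. This is a polynomial of degree $N$ (convex case) or $2N$ (non-convex case, built from squares of the face-defining linear functions) that vanishes on $\partial T\setminus e$ and is nonnegative on $e$. Let $\bw$ be an extension of $\bv_b-\bv_0$ from $e$ into $T$ with $\bw\in[P_k(T)]^d$, and pick the matrix-valued polynomial
\[
\bvarphi=\varphi_e\,\bw\otimes\bn_e\in[P_r(T)]^{d\times d};
\]
this needs $r\ge k+N$, resp.\ $r\ge k+2N$, up to the paper's precise convention for $r$. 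Inserting $\bvarphi$ into \eqref{2.4new} kills every boundary contribution except the one on $e$:
\[
\int_e\varphi_e|\bv_b-\bv_0|^2\,ds=(\nabla_w\bv-\nabla\bv_0,\bvarphi)_T .
\]
By norm equivalence on polynomial spaces and shape regularity, the left side is bounded below by $c\|\bv_b-\bv_0\|_e^2$. Scaling gives $\|\bvarphi\|_T\le Ch_T^{1/2}\|\bv_b-\bv_0\|_e$. Combining,
\[
h_T^{-1/2}\|\bv_b-\bv_0\|_e\le C\bigl(\|\nabla_w\bv\|_T+\|\nabla\bv_0\|_T\bigr).
\]

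\textbf{Lower bound, second step: the volume term.} To bound $\nabla\bv_0$, test \eqref{2.4new} with $\bvarphi=\varphi_T\nabla\bv_0$, where $\varphi_T$ is an interior bubble vanishing on all of $\partial T$. The boundary term disappears, so
\[
(\nabla\bv_0,\varphi_T\nabla\bv_0)_T=(\nabla_w\bv,\varphi_T\nabla\bv_0)_T,
\]
and norm equivalence yields $\|\nabla\bv_0\|_T\le C\|\nabla_w\bv\|_T$. The same bubble argument with symmetric test functions gives $\|\epsilon\bv_0\|_T\le C\|\epsilon_w\bv\|_T$, and with scalar test functions $\|\nabla\cdot\bv_0\|_T\le C\|\nabla_w\cdot\bv\|_T$.

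\textbf{Main obstacle.} The jump estimate above involves the full gradient $\nabla_w\bv$, while $\3bar\bv\3bar_{V_h}$ only contains $\epsilon_w\bv$ and $\nabla_w\cdot\bv$. To avoid needing a discrete Korn inequality, I would use a symmetric test function instead: take $\bvarphi=\varphi_e\,\mathrm{sym}(\bw\otimes\bn_e)$, which pairs with $\nabla_w\bv$ exactly as it pairs with $\epsilon_w\bv$. Its boundary term is
\[
\tfrac12\int_e\varphi_e\Bigl(|\bv_b-\bv_0|^2+\bigl((\bv_b-\bv_0)\cdot\bn_e\bigr)^2\Bigr)ds\ \ge\ \tfrac12\int_e\varphi_e|\bv_b-\bv_0|^2\,ds,
\]
which is still coercive. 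This yields
\[
h_T^{-1/2}\|\bv_b-\bv_0\|_e\le C\bigl(\|\epsilon_w\bv\|_T+\|\epsilon\bv_0\|_T\bigr),
\]
and combined with the second step, $h_T^{-1/2}\|\bv_b-\bv_0\|_e\le C\|\epsilon_w\bv\|_T$. Summing over faces and elements gives $C_1\3bar\bv\3bar_{1,h}\le\3bar\bv\3bar_{V_h}$.

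The delicate point is constructing the face bubble on non-convex elements. The bubble must be positive on $e$ yet vanish on the other faces, even when the hyperplane containing $e$ cuts through $T$. This is exactly why $2N$ is required, and here I would rely on the construction in \cite{autosecon}.
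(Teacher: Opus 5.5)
Your argument is the bubble-function proof from the works the paper cites for this lemma (\cite{autosecon,autostokes}; the paper gives no proof of its own), and your symmetric face test function $\varphi_e\,\mathrm{sym}(\bw\otimes\bn_e)$ is the right device for bounding the jump by $\epsilon_w\bv$ rather than $\nabla_w\bv$, so no discrete Korn inequality is needed. One correction, which also resolves your hedge about $r$: the face bubble is the product of the linear functions $l_i$ (squared in the non-convex case) over the other $N-1$ faces only, so it has degree $N-1$ (resp.\ $2N-2$), not $N$ (resp.\ $2N$). Including the factor for $e$ itself would make the bubble vanish on $e$. With the correct degree your face test function has degree $k+N-1=r$ (resp.\ $k+2N-2<r$) and lies in $[P_r(T)]^{d\times d}$ for the paper's $r$; with the degrees you stated it would not.
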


% \begin{lemma}\label{norm13}\cite{autostokes}
% For $q = \{q_0, q_b\} \in W_h$, there exists a constant $C$ such that:
% $$
% \|\nabla q_0\|_T \leq C\|\nabla_w q\|_T.
% $$
% \end{lemma}

\begin{lemma}\cite{autostokes}
There exist positive constants $C_1$ and $C_2$ such that for any $q = \{q_0, q_b\} \in W_h$, the following norm equivalence holds:
\begin{equation}\label{normeq8}
 C_1\|q\|_{1, h} \leq \3bar q\3bar_{W_h} \leq C_2\|q\|_{1, h}.
\end{equation}
\end{lemma}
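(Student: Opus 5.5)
The plan is to prove the two inequalities of \eqref{normeq8} element by element and then sum over $T\in{\cal T}_h$. The coefficient $K$ is bounded above and below, so it can be dropped. On each element it then suffices to show
\[
C_1\big(\|\nabla q_0\|_T^2+h_T^{-1}\|q_0-q_b\|_{\partial T}^2\big)\le \|\nabla_w q\|_T^2\le C_2\big(\|\nabla q_0\|_T^2+h_T^{-1}\|q_0-q_b\|_{\partial T}^2\big).
\]

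\textbf{Upper bound.} This is routine. In \eqref{gradientnew} we take $\bw=\nabla_w q\in[P_r(T)]^d$. Cauchy--Schwarz and the polynomial trace inequality \eqref{trace} then give
\[
\|\nabla_w q\|_T^2\le \|\nabla q_0\|_T\|\nabla_w q\|_T+\|q_b-q_0\|_{\partial T}\,Ch_T^{-1/2}\|\nabla_w q\|_T.
\]
Dividing by $\|\nabla_w q\|_T$ gives the upper bound. The constant depends on $r$, which is uniformly bounded under shape regularity since $N$ is bounded.

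\textbf{Lower bound.} This step is where the auto-stabilized choice $r=k-1+N$ (convex) or $r=k-1+2N$ (non-convex) is used, and I would follow the bubble-function construction of \cite{autosecon}. The bound splits into two parts.

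For the boundary jump, fix an edge/face $e\subset\partial T$. Construct a vector polynomial $\bw_e\in[P_r(T)]^d$ of the form
\[
\bw_e=\Phi_e\,\widetilde{(q_b-q_0)}\,\bn_e ,
\]
where:
\begin{itemize}
\item $\Phi_e$ is an edge bubble: the product of the linear functions $\ell_j$ of the other $N-1$ faces (squared, $\ell_j^2$, in the non-convex case so that it is nonnegative on $T$), so $\Phi_e$ vanishes on $\partial T\setminus e$;
\item $\widetilde{(q_b-q_0)}$ is the polynomial extension of $(q_b-q_0)|_e\in P_k(e)$ into $T$, constant in the normal direction;
\item $\bn_e$ is the (constant) normal of $e$.
\end{itemize}
The degree count $k+(N-1)$, or $k+2(N-1)$, is at most $r$, so $\bw_e$ is admissible. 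Inserting $\bw_e$ into \eqref{gradientnew} gives
\[
\int_e \Phi_e|q_b-q_0|^2=(\nabla_w q,\bw_e)_T-(\nabla q_0,\bw_e)_T .
\]
Scaling arguments give $\int_e\Phi_e|q_b-q_0|^2\ge C\|q_b-q_0\|_e^2$ and $\|\bw_e\|_T\le Ch_T^{1/2}\|q_b-q_0\|_e$. Hence
\[
h_T^{-1/2}\|q_b-q_0\|_e\le C\big(\|\nabla_w q\|_T+\|\nabla q_0\|_T\big).
\]

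For the interior part, take $\bw=\Phi_T\nabla q_0$ with $\Phi_T$ an interior bubble, e.g.\ the product of the face functions of degree $N$, squared for non-convex $T$ so that $\Phi_T\ge0$ and $\Phi_T$ vanishes on $\partial T$. The boundary term then drops and
\[
(\nabla_w q,\Phi_T\nabla q_0)_T=(\nabla q_0,\Phi_T\nabla q_0)_T\ge C\|\nabla q_0\|_T^2 ,
\]
using norm equivalence of weighted $L^2$ on the finite-dimensional polynomial space. This yields $\|\nabla q_0\|_T\le C\|\nabla_w q\|_T$. Combining it with the edge estimate gives the lower bound.

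The degree bookkeeping needs care here: the interior bubble needs degree $(k-1)+N$ or $(k-1)+2N\le r$, which is exactly the prescribed $r$. The edge bubble, however, carries the factor $q_b-q_0$ of degree $k$, so it needs degree $k+N-1$ or $k+2(N-1)$. This fits in the convex case but is borderline for non-convex elements, where a careful choice (dropping the square on the face $e$ itself) is needed.

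\textbf{Main obstacle.} The hardest step is the uniform-in-$h$ lower bound $\int_e\Phi_e|\phi|^2\ge C\|\phi\|_e^2$ for non-convex $T$. There $\Phi_e$ may vanish on parts of $e$ where other face hyperplanes intersect it, so the weighted norm could degenerate. I would handle this as in \cite{autosecon}. First use squared linear factors, so the bubble is nonnegative throughout $T$. Then argue that $\Phi_e$ vanishes on $e$ only on a set of measure zero, which makes $\int_e\Phi_e\phi^2$ a norm on $P_k(e)$. Finally, a compactness/scaling argument over the shape-regular reference configurations gives constants independent of $h$.
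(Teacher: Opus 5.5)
Your proposal is correct and is essentially the argument the paper imports from \cite{autostokes} (following \cite{autosecon}). The upper bound comes from testing with $\nabla_w q$ and applying the polynomial trace inequality. The lower bound comes from the interior bubble $\prod_i\ell_i$ (resp.\ $\prod_i\ell_i^2$) tested against $\nabla q_0$, together with the edge bubbles $\prod_{j\neq i}\ell_j$ (resp.\ $\prod_{j\neq i}\ell_j^2$) tested against the normal extension of $q_b-q_0$. Two small remarks: the non-convex edge case is not borderline, since $k+2(N-1)\le k-1+2N=r$; and your claim that $\Phi_e$ vanishes only on a null subset of $e$ tacitly requires that no other face of $T$ be coplanar with $e$ (e.g.\ no hanging nodes), an assumption the cited bubble-function argument also relies on.
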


\begin{lemma}\label{cover}\cite{muhu}
For any $\bv, \bw \in V_h^c$, we have:
\begin{equation}\label{aform}
 a(\bu, \bv) \leq \3bar \bu\3bar_{V_h}\3bar \bv\3bar_{V_h}, \quad a(\bv, \bv) \leq \3bar \bv\3bar_{V_h}^2.
\end{equation}
For any $p, q \in W_h^t$, it holds that:
\begin{equation}
 c(p, q) \leq \3bar p\3bar_{W_h}\3bar q\3bar_{W_h}, \quad c(p, q) \leq \3bar q\3bar_{W_h}^2.
\end{equation}
\end{lemma}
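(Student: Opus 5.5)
The plan is to prove both parts of Lemma \ref{cover} directly from the definitions of the forms and the norms. The two main tools are the Cauchy--Schwarz inequality, applied elementwise on each $T$, and the discrete Cauchy--Schwarz inequality for sums over $T\in{\cal T}_h$. Since $\mu,\lambda>0$, we can absorb the coefficients by writing $2\mu=(\sqrt{2\mu})^2$ and $\lambda=(\sqrt{\lambda})^2$. We read $K$ as a symmetric positive definite tensor, or as a positive scalar. In that case $(K\nabla_w p,\nabla_w q)_T$ is a genuine inner product on $[P_r(T)]^d$, and we interpret $K\|\nabla_w q\|_T^2$ in \eqref{whnorm1} as $(K\nabla_w q,\nabla_w q)_T$. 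Note also that the second inequality in each pair should be an identity: $a(\bv,\bv)=\3bar \bv\3bar_{V_h}^2$ and $c(q,q)=\3bar q\3bar_{W_h}^2$. The statement as printed, $c(p,q)\le \3bar q\3bar_{W_h}^2$, should be read with $p=q$.

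\textbf{Step 1: the form $a$.} For $\bu,\bv\in V_h^c$, we write
\[
a(\bu,\bv)=\sum_{T\in{\cal T}_h}\Big[(\sqrt{2\mu}\,\epsilon_w\bu,\sqrt{2\mu}\,\epsilon_w\bv)_T+(\sqrt{\lambda}\,\nabla_w\cdot\bu,\sqrt{\lambda}\,\nabla_w\cdot\bv)_T\Big].
\]
Cauchy--Schwarz on each $T$ bounds each term by the product of the $L^2(T)$ norms. Grouping the pair $(\sqrt{2\mu}\|\epsilon_w\bu\|_T,\sqrt\lambda\|\nabla_w\cdot\bu\|_T)$ as a vector in $\mathbb R^2$, and likewise for $\bv$, we apply Cauchy--Schwarz once more in $\mathbb R^{2|{\cal T}_h|}$. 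This gives
\[
a(\bu,\bv)\le\Big(\sum_T 2\mu\|\epsilon_w\bu\|_T^2+\lambda\|\nabla_w\cdot\bu\|_T^2\Big)^{1/2}\Big(\sum_T 2\mu\|\epsilon_w\bv\|_T^2+\lambda\|\nabla_w\cdot\bv\|_T^2\Big)^{1/2},
\]
which is exactly $\3bar\bu\3bar_{V_h}\3bar\bv\3bar_{V_h}$ by \eqref{vhnorm1}. Setting $\bu=\bv$ in the definition of $a$ gives $a(\bv,\bv)=\3bar\bv\3bar_{V_h}^2$ immediately.

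\textbf{Step 2: the form $c$.} Since $K$ is symmetric positive definite, it has a square root $K^{1/2}$. This lets us write $(K\nabla_w p,\nabla_w q)_T=(K^{1/2}\nabla_w p,K^{1/2}\nabla_w q)_T$. The same two-level Cauchy--Schwarz argument as in Step 1 then yields $c(p,q)\le\3bar p\3bar_{W_h}\3bar q\3bar_{W_h}$. Taking $p=q$ gives $c(q,q)=\3bar q\3bar_{W_h}^2$.

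\textbf{Main obstacle.} There is no analytic difficulty here. The only delicate point is interpretive: handling a tensor-valued $K$ correctly, via $K^{1/2}$, and reading the second inequalities as the equalities $a(\bv,\bv)=\3bar\bv\3bar_{V_h}^2$ and $c(q,q)=\3bar q\3bar_{W_h}^2$. We do not need the norm equivalences \eqref{normeq} and \eqref{normeq8}; they only matter later, for showing that $\3bar\cdot\3bar_{V_h}$ and $\3bar\cdot\3bar_{W_h}$ are norms on $V_h^c$ and $W_h^t$.
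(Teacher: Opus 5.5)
Your proof is correct and is essentially the paper's approach: the paper gives no argument of its own, only a citation to earlier work, and the elementwise Cauchy--Schwarz followed by the discrete Cauchy--Schwarz over ${\cal T}_h$ (with $K^{1/2}$ for a tensor-valued $K$) is exactly the standard argument being cited. Your reading of the second inequalities as the identities $a(\bv,\bv)=\3bar\bv\3bar_{V_h}^2$ and $c(q,q)=\3bar q\3bar_{W_h}^2$ is also right; the printed bound $c(p,q)\le\3bar q\3bar_{W_h}^2$ fails for general $p\neq q$, and the identities are precisely how the lemma is used in the paper's proof of the inf-sup condition \eqref{infsup2}.
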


\begin{lemma} \label{infsuplemma}\cite{wg18}
There exists a constant $\alpha > 0$, independent of $h$, such that for all $p \in W_h$:
\begin{equation}\label{infsup}
 \sup_{\bv \in V_h^c} \frac{(\nabla_w \cdot \bv, p)}{\3bar \bv\3bar_{V_h}} \geq \alpha \|p\|.
\end{equation}
\end{lemma}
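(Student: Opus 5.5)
We prove the inf-sup condition \eqref{infsup} by the standard Fortin-type argument: reduce it to the continuous inf-sup condition through a suitable interpolation of $H^1$ vector fields into $V_h^c$. Fix $p\in W_h$; only its interior component $p_0$ enters, so we read $\|p\|=\|p_0\|$ and $(\nabla_w\cdot\bv,p)=\sum_T(\nabla_w\cdot\bv,p_0)_T$. Since $\Gamma_t$ has positive measure, the continuous inf-sup condition gives $\tilde\bv\in[H_c^1(\Omega)]^d$ with
\begin{equation*}
\nabla\cdot\tilde\bv=p_0,\qquad \|\tilde\bv\|_1\le C\|p_0\|.
\end{equation*}
This is a Bogovskii/Stokes-type right inverse of the divergence with homogeneous Dirichlet data only on $\Gamma_c$. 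The Neumann part $\Gamma_t$ removes the mean-zero constraint.

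Next we define $\bv=\{Q_0\tilde\bv,Q_b\tilde\bv\}\in V_h^c$, where $Q_0$ and $Q_b$ are the $L^2$ projections onto $[P_k(T)]^d$ and $[P_k(e)]^d$. Because $\tilde\bv$ vanishes on $\Gamma_c$, so does $Q_b\tilde\bv$. The key step is the commuting property on each $T$. By \eqref{div} with $w\in P_r(T)$,
\begin{equation*}
(\nabla_w\cdot\bv,w)_T=-(Q_0\tilde\bv,\nabla w)_T+\langle Q_b\tilde\bv\cdot\bn,w\rangle_{\partial T}.
\end{equation*}
We want to replace $Q_0\tilde\bv$ by $\tilde\bv$ and $Q_b\tilde\bv$ by $\tilde\bv$. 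This is exact only when $\nabla w\in[P_k(T)]^d$ and $w|_e\in P_k(e)$. The test function we actually need is $w=p_0\in P_k(T)$, which satisfies both. Testing $\nabla_w\cdot\bv\in P_r(T)$ against $p_0\in P_k(T)\subset P_r(T)$ (since $r\ge k$) therefore gives
\begin{equation*}
(\nabla_w\cdot\bv,p_0)_T=-(\tilde\bv,\nabla p_0)_T+\langle\tilde\bv\cdot\bn,p_0\rangle_{\partial T}=(\nabla\cdot\tilde\bv,p_0)_T.
\end{equation*}
Summing over $T$ yields $(\nabla_w\cdot\bv,p)=\|p_0\|^2$. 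This is where the choice $\bv_0\in[P_k]^d$ and $p_0\in P_k$ is essential, and it needs no local projection of the weak divergence itself.

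The remaining, and main, obstacle is the stability bound $\3bar\bv\3bar_{V_h}\le C\|\tilde\bv\|_1$. Here we use the upper half of the norm equivalence \eqref{normeq}: $\3bar\bv\3bar_{V_h}\le C_2\|\bv\|_{1,h}$. It then suffices to bound $\|\epsilon Q_0\tilde\bv\|_T$, $\|\nabla\cdot Q_0\tilde\bv\|_T$ and $h_T^{-1/2}\|Q_0\tilde\bv-Q_b\tilde\bv\|_{\partial T}$ by $C\|\tilde\bv\|_{1,T}$.
\begin{itemize}
\item The first two follow from the $H^1$-stability of $Q_0$ on shape-regular polytopes.
\item For the boundary term, write $Q_0\tilde\bv-Q_b\tilde\bv=Q_b(Q_0\tilde\bv-\tilde\bv)$ and use that $Q_b$ is an $L^2(e)$ projection. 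The trace inequality \eqref{tracein} combined with the approximation property $\|\tilde\bv-Q_0\tilde\bv\|_T\le Ch_T|\tilde\bv|_{1,T}$ then gives $h_T^{-1}\|Q_0\tilde\bv-\tilde\bv\|^2_{\partial T}\le C|\tilde\bv|_{1,T}^2$.
\end{itemize}

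The constants $C_2$ must be $h$-independent. Any $\lambda$-dependence from $\sqrt{\lambda}\|\nabla\cdot Q_0\tilde\bv\|$ is absorbed into $\alpha$; this is consistent with $\alpha$ being required independent of $h$ only. Combining the two steps,
\begin{equation*}
\sup_{\bv\in V_h^c}\frac{(\nabla_w\cdot\bv,p)}{\3bar\bv\3bar_{V_h}}\ge\frac{\|p_0\|^2}{C\|p_0\|}=\alpha\|p\|.
\end{equation*}
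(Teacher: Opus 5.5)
Your proof is correct and is essentially the standard Fortin-type argument behind the result that the paper simply cites from \cite{wg18}: lift $p_0$ to $\tilde\bv\in[H_c^1(\Omega)]^d$ with $\nabla\cdot\tilde\bv=p_0$, take $\bv=Q_h\tilde\bv$, and combine $(\nabla_w\cdot Q_h\tilde\bv,p_0)=\|p_0\|^2$ with $\3bar Q_h\tilde\bv\3bar_{V_h}\le C\|\tilde\bv\|_1$ (the $\lambda$-dependence of $\alpha$ you flag is unavoidable, since $\3bar\bv\3bar_{V_h}\ge\sqrt{\lambda}\|\nabla_w\cdot\bv\|$ forces $\alpha\le\lambda^{-1/2}$). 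Checking the key identity only against $p_0\in P_k(T)$, rather than invoking the full commuting property \eqref{pronew2}, is the right call: for the auto-stabilized degrees $r=k-1+N$ or $r=k-1+2N$ in two and three dimensions, a general $w\in P_r(T)$ has $\nabla w\notin[P_k(T)]^d$ and $w|_e\notin P_k(e)$, so \eqref{pronew2} need not hold on $P_r(T)$.
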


\begin{lemma}
The semi-norms $\3bar \cdot\3bar_{V_h}$ and $\3bar \cdot\3bar_{W_h}$ define norms on $V_h^c$ and $W_h^t$, respectively.
\end{lemma}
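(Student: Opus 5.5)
Since $\3bar\cdot\3bar_{V_h}$ and $\3bar\cdot\3bar_{W_h}$ are clearly nonnegative, homogeneous and satisfy the triangle inequality (they are $L^2$-type norms of linear images of $\bv$ and $q$), the only point is positive definiteness. We show that $\3bar \bv\3bar_{V_h}=0$ with $\bv\in V_h^c$ forces $\bv=0$, and that $\3bar q\3bar_{W_h}=0$ with $q\in W_h^t$ forces $q=0$. The tool is the norm equivalences \eqref{normeq} and \eqref{normeq8}, which reduce the question to the discrete semi-norms $\|\cdot\|_{1,h}$ of \eqref{vhnorm2} and \eqref{whnorm2}.

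\textbf{Displacement.} Suppose $\3bar \bv\3bar_{V_h}=0$. By \eqref{normeq}, $\|\bv\|_{1,h}=0$. Since $\mu>0$, this gives $\epsilon \bv_0=0$ on each $T$ and $\bv_0=\bv_b$ on each $\partial T$.

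\begin{enumerate}
\item From $\epsilon\bv_0=0$, each $\bv_0|_T$ is a rigid body motion.
\item Because $\bv_b$ is single-valued on interior faces and equals the traces of $\bv_0$ from both sides, $\bv_0$ is continuous across $\mathcal E_h^0$. Hence $\bv_0\in[H^1(\Omega)]^d$ with $\epsilon\bv_0=0$ on all of $\Omega$.
\item A function in $[H^1(\Omega)]^d$ with vanishing strain on the connected domain $\Omega$ is a single global rigid motion.
\item On $\Gamma_c$ we have $\bv_0=\bv_b=0$. Since $\Gamma_c$ has positive $(d-1)$-measure and is not contained in a lower-dimensional affine set, a rigid motion vanishing there is identically zero.
\end{enumerate}

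Thus $\bv_0=0$, and then $\bv_b=\bv_0|_{\partial T}=0$. If one is worried about the degenerate case where $\Gamma_c$ lies in a single line in 3D, the argument can instead invoke Korn's inequality on $[H_c^1(\Omega)]^d$.

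\textbf{Pressure.} Suppose $\3bar q\3bar_{W_h}=0$. By \eqref{normeq8} (using $K$ positive), $\nabla q_0=0$ on each $T$ and $q_0=q_b$ on each $\partial T$.

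\begin{enumerate}
\item Each $q_0|_T$ is constant.
\item Single-valuedness of $q_b$ on interior faces makes these constants agree across neighbouring elements, so $q_0$ is a global constant on the connected $\Omega$.
\item On $\Gamma_t$, $q_0=q_b=0$, so the constant is zero. Hence $q_0=0$ and $q_b=0$.
\end{enumerate}

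The only delicate points are the interelement continuity argument, which relies on $\bv_b$ and $q_b$ being single-valued on interior faces in the global spaces, and the rigid-motion step on $\Gamma_c$. Both are standard. Everything substantive, including the auto-stabilization on non-convex elements, is already absorbed into the cited norm equivalences.
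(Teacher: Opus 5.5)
Your proof is correct and follows essentially the same route as the paper: you use the norm equivalences \eqref{normeq} and \eqref{normeq8} to pass to the discrete semi-norms \eqref{vhnorm2} and \eqref{whnorm2}, obtain elementwise rigid motions (resp.\ constants), glue them via the single-valuedness of $\bv_b$ (resp.\ $q_b$), and eliminate the global rigid motion (resp.\ constant) using the boundary condition on $\Gamma_c$ (resp.\ $\Gamma_t$). The only cosmetic difference is that you finish with the elementary fact that a nonzero rigid motion vanishes at most on an affine set of dimension $d-2$, whereas the paper cites Korn's second inequality; as a consequence, your 3D ``degenerate line'' caveat is vacuous, since $\Gamma_c$ has positive $(d-1)$-measure.
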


\begin{proof}
Assume $\3bar \bv\3bar_{V_h} = 0$ for some $\bv \in V_h^c$. By the norm equivalence \eqref{normeq}, we have $\|\bv\|_{1, h} = 0$. This implies $\epsilon \bv_0 = 0$ and $\nabla \cdot \bv_0 = 0$ in each element $T$, with $\bv_0 = \bv_b$ on $\partial T$. Consequently, $\bv_0 \in RM(T)$ on each element, where $RM(T)$ is the space of rigid body motions defined by:
$$
 RM(T) = \{\textbf{a} + \eta \textbf{x} : \textbf{a} \in \mathbb{R}^d, \eta \in so(d)\},
$$
where $so(d)$ is the space of skew-symmetric $d \times d$ matrices. The condition $\bv_0|_e = \bv_b$ implies the continuity of $\bv_0$ across the entire domain $\Omega$. The boundary condition $\bv_b = 0$ on $\Gamma_c$ ensures $\bv_0 = 0$ on $\Gamma_c$. By the second Korn's inequality, it follows that $\bv_0 = 0$ in $\Omega$, which implies $\bv_b = 0$ and thus $\bv \equiv 0$.

Next, we show that $\3bar \cdot\3bar_{W_h}$ is a norm on $W_h^t$. Assume $\3bar q\3bar_{W_h} = 0$ for $q \in W_h^t$. By \eqref{normeq8}, we have $\|q\|_{1, h} = 0$, implying $\nabla q_0 = 0$ on each $T$ and $q_0 = q_b$ on $\partial T$. Thus, $q_0$ is constant on each element. The interface condition $q_0 = q_b$ ensures that $q_0$ is constant throughout $\Omega$. Since $q_b = 0$ on $\Gamma_t$, it follows that $q_0 \equiv 0$ and $q_b \equiv 0$, yielding $q \equiv 0$.
\end{proof}

Finally, the bilinear form $b(\cdot, \cdot)$ is bounded as follows:
\begin{equation}\label{bform}
 b(\bv, q) \leq C\3bar \bv\3bar_{V_h} \|q\|.
\end{equation}

By applying Lemmas \ref{cover}, \ref{infsuplemma}, and inequality \eqref{bform}, and invoking standard theory \cite{wg18}, we establish that the linear system \eqref{WG} is well-posed. We define the following comprehensive bilinear form for $(\bu, p), (\bv, q) \in V_h^c \times W_h^t$:
$$
 T(\bu, p; \bv, q) = a(\bu, \bv) - b(\bv, p) - b(\bu, q) - \Delta t c(p, q),
$$
associated with the weighted norm:
$$
\3bar (\bv, q)\3bar_{\Delta t}^2 = \3bar \bv\3bar_{V_h}^2 + \|q\|^2 + \Delta t \3bar q\3bar_{W_h}^2.
$$

\begin{lemma}
The bilinear form $T(\cdot, \cdot; \cdot, \cdot)$ satisfies the inf-sup condition:
\begin{equation}\label{infsup2}
 \sup_{(\bv, q) \in V_h^c \times W_h^t} \frac{T(\bu, p; \bv, q)}{\3bar (\bv, q)\3bar_{\Delta t}} \geq \zeta \3bar (\bu, p)\3bar_{\Delta t},
\end{equation}
where $\zeta > 0$ is independent of $h$ and $\Delta t$. Consequently, the discrete system \eqref{WG} is well-posed.
\end{lemma}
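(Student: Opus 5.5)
Given $(\bu,p)\in V_h^c\times W_h^t$, we will exhibit one test pair $(\bv,q)$ built from $(\bu,p)$ with $T(\bu,p;\bv,q)\ge c_1\3bar(\bu,p)\3bar_{\Delta t}^2$ and $\3bar(\bv,q)\3bar_{\Delta t}\le c_2\3bar(\bu,p)\3bar_{\Delta t}$, where $c_1,c_2$ do not depend on $h$ or $\Delta t$; then \eqref{infsup2} holds with $\zeta=c_1/c_2$. We work in three steps.

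\emph{Step 1 (diagonal test).} Take $(\bv,q)=(\bu,-p)$. The two $b$-terms cancel, and since $a(\bu,\bu)=\3bar\bu\3bar_{V_h}^2$ and $c(p,p)=\3bar p\3bar_{W_h}^2$ by the definitions \eqref{vhnorm1} and \eqref{whnorm1},
\[
T(\bu,p;\bu,-p)=\3bar\bu\3bar_{V_h}^2+\Delta t\3bar p\3bar_{W_h}^2 .
\]
This controls everything in $\3bar(\bu,p)\3bar_{\Delta t}$ except $\|p\|$.

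\emph{Step 2 (recovering $\|p\|$).} By Lemma \ref{infsuplemma} there is $\bw\in V_h^c$, normalized so that $\3bar\bw\3bar_{V_h}=\|p\|$, with $(\nabla_w\cdot\bw,p)\ge\alpha\|p\|^2$. Test with $(-\bw,0)$. Using the boundedness of $a$ from Lemma \ref{cover} and Young's inequality,
\[
T(\bu,p;-\bw,0)=-a(\bu,\bw)+b(\bw,p)\ge \alpha\|p\|^2-\3bar\bu\3bar_{V_h}\|p\|\ge \tfrac{\alpha}{2}\|p\|^2-\tfrac{1}{2\alpha}\3bar\bu\3bar_{V_h}^2 .
\]

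\emph{Step 3 (combination).} Set $(\bv,q)=(\bu-\delta\bw,-p)$ with $\delta=\alpha$. Adding Step 1 and $\delta$ times Step 2 gives
\[
T(\bu,p;\bv,q)\ge \tfrac12\3bar\bu\3bar_{V_h}^2+\tfrac{\alpha^2}{2}\|p\|^2+\Delta t\3bar p\3bar_{W_h}^2\ge c_1\3bar(\bu,p)\3bar_{\Delta t}^2 .
\]
For the upper bound,
\[
\3bar(\bv,q)\3bar_{\Delta t}\le\3bar\bu\3bar_{V_h}+\delta\3bar\bw\3bar_{V_h}+\|p\|+\sqrt{\Delta t}\3bar p\3bar_{W_h}\le C\3bar(\bu,p)\3bar_{\Delta t},
\]
since $\3bar\bw\3bar_{V_h}=\|p\|$. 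This gives \eqref{infsup2}.

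\emph{Well-posedness.} System \eqref{WG} is square and finite dimensional. If the data vanish, \eqref{infsup2} forces $\3bar(\bu_h,p_h)\3bar_{\Delta t}=0$. By the preceding lemma $\3bar\cdot\3bar_{V_h}$ is a norm on $V_h^c$, and $\|\cdot\|$ is a norm on $W_h^t$, so $(\bu_h,p_h)=0$. Hence the system has a unique solution.

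\emph{Main obstacle.} The only delicate point is Step 2. Lemma \ref{infsuplemma} is stated for $p\in W_h$ with $\bw$ ranging over $V_h^c$, and the constant $\alpha$ must be independent of $\lambda$ (so that the argument stays locking-free) as well as of $h$ and $\Delta t$. We take $\alpha$ as given by \cite{wg18} under these conditions; beyond that, all constants depend only on $\alpha$, so $\zeta$ is independent of $h$ and $\Delta t$.
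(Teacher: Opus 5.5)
Your proof is correct and is essentially the paper's argument: the same test pair $(\bu-\alpha\bw,-p)$ with the normalization $\3bar\bw\3bar_{V_h}=\|p\|$ gives the identical lower bound $\frac12\3bar\bu\3bar_{V_h}^2+\frac{\alpha^2}{2}\|p\|^2+\Delta t\3bar p\3bar_{W_h}^2$; you only organize it by linearity and apply Young's inequality to a different product. One small correction in the well-posedness step: $\|p\|$ sees only $p_0$, so it is not a norm on $W_h^t$, and injectivity should instead come from the term $\Delta t\3bar p\3bar_{W_h}^2$ together with the preceding lemma that $\3bar\cdot\3bar_{W_h}$ is a norm on $W_h^t$. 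Also, your side remark about $\lambda$ is off: $\alpha$ cannot be $\lambda$-independent here, since $b(\bv,p)\le\lambda^{-1/2}\3bar\bv\3bar_{V_h}\|p\|$, but the statement does not require this.
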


\begin{proof}
From the inf-sup condition \eqref{infsup}, for any $p = \{p_0, p_b\} \in W_h^t$, there exists $\bw \in V_h^c$ such that:
$$
 b(\bw, p) \geq \alpha \|p\|^2, \quad \text{with } \3bar \bw \3bar_{V_h} = \|p\|.
$$
For $\bu \in V_h^c$ and $p \in W_h^t$, we choose $\bv = \bu - \theta \bw$ and $q = -p$. Then:
\begin{equation*}
\begin{split}
 T(\bu, p; \bv, q) &= a(\bu, \bu - \theta\bw) - b(\bu - \theta\bw, p) + b(\bu, p) + \Delta t c(p, p) \\
 &= \3bar \bu\3bar_{V_h}^2 - \theta a(\bu, \bw) + \theta b(\bw, p) + \Delta t \3bar p\3bar_{W_h}^2 \\
 &\geq \3bar \bu\3bar_{V_h}^2 - \frac{1}{2}\3bar \bu\3bar_{V_h}^2 - \frac{\theta^2}{2} \3bar \bw\3bar^2_{V_h} + \theta \alpha \|p\|^2 + \Delta t \3bar p\3bar^2_{W_h} \\
 &= \frac{1}{2}\3bar \bu\3bar_{V_h}^2 + \left(\theta \alpha - \frac{\theta^2}{2}\right)\|p\|^2 + \Delta t \3bar p\3bar^2_{W_h}.
\end{split}
\end{equation*}
Setting $\theta = \alpha$, we obtain:
$$
 T(\bu, p; \bv, q) \geq \frac{1}{2}\3bar \bu\3bar_{V_h}^2 + \frac{\alpha^2}{2}\|p\|^2 + \Delta t \3bar p\3bar^2_{W_h} \geq \zeta_1 \3bar (\bu, p) \3bar_{\Delta t}^2,
$$
where $\zeta_1 = \min \{\frac{1}{2}, \frac{\alpha^2}{2}\}$. Furthermore, we observe that:
\begin{equation*}
 \begin{split}
  \3bar(\bv, q)\3bar_{\Delta t}^2 &= \3bar(\bu - \theta\bw, -p)\3bar_{\Delta t}^2 \\
  &\leq 2\3bar\bu\3bar^2_{V_h} + 2\theta^2\3bar \bw \3bar_{V_h}^2 + \|p\|^2 + \Delta t \3bar p\3bar_{W_h}^2 \\
  &\leq 2\3bar\bu\3bar^2_{V_h} + (2\alpha^2 + 1) \|p\|^2 + \Delta t \3bar p\3bar^2_{W_h} \leq \zeta_2\3bar(\bu, p)\3bar_{\Delta t}^2,
 \end{split}
\end{equation*}
where $\zeta_2 = \max \{2, 2\alpha^2 + 1\}$. Thus, \eqref{infsup2} holds with $\zeta = \zeta_1 \zeta_2^{-1/2}$. The continuity of $T(\cdot, \cdot; \cdot, \cdot)$ is readily verified. 

Furthermore, it is easy to show that the blinear form $B(\bu, p;\bv, q)$ is continuous, i.e.,
$$
B(\bu, p;\bv, q) \leq \3bar(\bu, p)\3bar_{\Delta t} \3bar(\bv, q)\3bar_{\Delta t}.$$

By Babuska theory, the linear system \eqref{WG} is well-posed.
\end{proof}

\section{Error Equations}

In this section, we derive the error estimates for the fully discrete scheme \eqref{WG2}. We assume that the initial data $\mathbf{u}_h^0$ satisfies the discrete divergence-free condition $\nabla_w \cdot \mathbf{u}_h^0 = 0$ as described in \eqref{ini}; however, alternative initial conditions may be considered without altering the fundamental analysis.

We begin by establishing the error equations for our model. Let $\mathcal{Q}_h$ denote the $L^2$ projection onto the space $P_r(T)$, where the degree $r$ is defined as:
\begin{itemize}
    \item $r = k-1+2N$ for non-convex elements $T$,
    \item $r = k-1+N$ for convex elements $T$,
\end{itemize}
where $N$ represents the number of faces (or edges) of the element $T$.

For each element $T \in \T_h$, let $Q_0$ denote the $L^2$ projection onto $P_k(T)$, and for each edge or face $e \subset \partial T$, let $Q_b$ denote the $L^2$ projection onto $P_{k}(e)$. For any $\bv \in [H^1(\Omega)]^d$, the projection $Q_h \bv$ is defined by:
$$
 (Q_h \bv)|_T := \{Q_0(\bv|_T), Q_b(\bv|_{\partial T})\}, \quad \forall T \in \T_h.
$$
An analogous definition holds for any $q \in H^1(\Omega)$:
$$
 (Q_h q)|_T := \{Q_0(q|_T), Q_b(q|_{\partial T})\}, \quad \forall T \in \T_h.
$$

The following lemma summarizes the key projection properties required for the subsequent analysis.
\begin{lemma}\label{Lemma5.1new}\cite{wg18}
For $\mathbf{u} \in [H^1(T)]^d$ and $p \in H^1(T)$, the following commutative properties hold:
\begin{align}
    \nabla_{w} Q_h \mathbf{u} &= \mathcal{Q}_h(\nabla \mathbf{u}), \label{pronew1} \\
    \nabla_{w} \cdot Q_h \mathbf{u} &= \mathcal{Q}_h(\nabla \cdot \mathbf{u}), \label{pronew2} \\
    \epsilon_{w}(Q_h \mathbf{u}) &= \mathcal{Q}_h(\epsilon(\mathbf{u})), \label{pronew3} \\
    \nabla_{w}(Q_h p) &= \mathcal{Q}_h(\nabla p). \label{pronew4}
\end{align}
\end{lemma}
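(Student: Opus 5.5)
The plan is to verify each identity directly from the defining relations of the discrete weak operators, \eqref{2.4}, \eqref{div} and \eqref{gradient}, applied to the projected weak function $Q_h\mathbf{u}=\{Q_0\mathbf{u},Q_b\mathbf{u}\}$ (resp.\ $Q_hp$). Both sides of each identity lie in the same polynomial space $[P_r(T)]^{d\times d}$, $P_r(T)$ or $[P_r(T)]^d$. It therefore suffices to show that they have the same $L^2(T)$ inner product against every test polynomial of that space. The proof is element by element, so I fix $T\in\T_h$ throughout.

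For \eqref{pronew1}, take $\boldsymbol{\varphi}\in[P_r(T)]^{d\times d}$. By \eqref{2.4},
\begin{equation*}
(\nabla_w Q_h\mathbf{u},\boldsymbol{\varphi})_T=-(Q_0\mathbf{u},\nabla\cdot\boldsymbol{\varphi})_T+\langle Q_b\mathbf{u},\boldsymbol{\varphi}\cdot\mathbf{n}\rangle_{\partial T}.
\end{equation*}
The key step is to remove the projections:
\begin{equation*}
(Q_0\mathbf{u},\nabla\cdot\boldsymbol{\varphi})_T=(\mathbf{u},\nabla\cdot\boldsymbol{\varphi})_T,\qquad \langle Q_b\mathbf{u},\boldsymbol{\varphi}\cdot\mathbf{n}\rangle_{e}=\langle \mathbf{u},\boldsymbol{\varphi}\cdot\mathbf{n}\rangle_{e}\quad\text{for each } e\subset\partial T.
\end{equation*}
This is exactly the $L^2$-orthogonality property of $Q_0$ and $Q_b$, and it applies because $\nabla\cdot\boldsymbol{\varphi}$ and $\boldsymbol{\varphi}\cdot\mathbf{n}|_e$ lie in the polynomial ranges onto which $Q_0$ and $Q_b$ project. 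I take this as given in the setting of \cite{wg18}, where the degrees are matched.

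With the projections removed, I integrate by parts back on $T$, using $\mathbf{u}\in[H^1(T)]^d$, and obtain
\begin{equation*}
(\nabla_w Q_h\mathbf{u},\boldsymbol{\varphi})_T=(\nabla\mathbf{u},\boldsymbol{\varphi})_T.
\end{equation*}
Since $\boldsymbol{\varphi}\in[P_r(T)]^{d\times d}$, the right-hand side equals $(\mathcal{Q}_h\nabla\mathbf{u},\boldsymbol{\varphi})_T$. Both $\nabla_wQ_h\mathbf{u}$ and $\mathcal{Q}_h\nabla\mathbf{u}$ belong to $[P_r(T)]^{d\times d}$, so taking $\boldsymbol{\varphi}$ equal to their difference gives \eqref{pronew1}.

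The same three moves give \eqref{pronew2} and \eqref{pronew4}: remove the projections by orthogonality, integrate by parts, and identify the result with $\mathcal{Q}_h$. For \eqref{pronew2} I use \eqref{div} with $w\in P_r(T)$, so that $-(\mathbf{u},\nabla w)_T+\langle\mathbf{u}\cdot\mathbf{n},w\rangle_{\partial T}=(\nabla\cdot\mathbf{u},w)_T$. For \eqref{pronew4} I use \eqref{gradient} with $\mathbf{w}\in[P_r(T)]^d$. Identity \eqref{pronew3} then needs no new computation. Transposition commutes with the componentwise projection $\mathcal{Q}_h$, so symmetrizing \eqref{pronew1} gives
\begin{equation*}
\epsilon_w(Q_h\mathbf{u})=\tfrac12\bigl(\mathcal{Q}_h\nabla\mathbf{u}+(\mathcal{Q}_h\nabla\mathbf{u})^T\bigr)=\mathcal{Q}_h\epsilon(\mathbf{u}).
\end{equation*}

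The main obstacle is the projection-removal step. Integration by parts and the final identification with $\mathcal{Q}_h$ are routine. Projection removal is the only place where the relation between the degree $r$ and the degrees of $Q_0$ and $Q_b$ enters. My approach there is to state explicitly the orthogonality of $Q_0$ against $\nabla\cdot\boldsymbol{\varphi}$ (resp.\ $\nabla w$, $\nabla\cdot\mathbf{w}$) and of $Q_b$ against the normal traces, invoking the corresponding property from \cite{wg18}. All remaining steps are then exact identities, with no remainder terms.
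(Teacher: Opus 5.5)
The step you flag as the main obstacle is not merely unproved in this paper's setting; it is false, and so is the lemma as stated here. Your argument is the standard one and is correct whenever $r\le k$, but the degrees are not matched in this paper.

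\textbf{Why the projection-removal step fails.} The identity $(Q_0\mathbf{u},\nabla\cdot\boldsymbol{\varphi})_T=(\mathbf{u},\nabla\cdot\boldsymbol{\varphi})_T$ requires $\nabla\cdot\boldsymbol{\varphi}\in[P_k(T)]^d$, i.e.\ $r\le k+1$. The identity $\langle Q_b\mathbf{u},\boldsymbol{\varphi}\cdot\mathbf{n}\rangle_e=\langle\mathbf{u},\boldsymbol{\varphi}\cdot\mathbf{n}\rangle_e$ requires $\boldsymbol{\varphi}\cdot\mathbf{n}|_e\in[P_k(e)]^d$, i.e.\ $r\le k$.

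Here $Q_0$ and $Q_b$ project onto $P_k(T)$ and $P_k(e)$. The weak operators and $\mathcal{Q}_h$, however, use $r=k-1+N$ or $r=k-1+2N$, so $r\ge k+2$; even the numerical tests use $r=k+1$ or $k+2$. The degrees match only in the classical setting of \cite{wg18}, where the weak gradient has lower degree. The paper supplies no argument beyond that citation, and your deferral to \cite{wg18} is exactly where the proof breaks. Without orthogonality, your computation yields only the defect identity
\begin{equation*}
(\nabla_wQ_h\mathbf{u}-\mathcal{Q}_h\nabla\mathbf{u},\boldsymbol{\varphi})_T=-(Q_0\mathbf{u}-\mathbf{u},\nabla\cdot\boldsymbol{\varphi})_T+\langle Q_b\mathbf{u}-\mathbf{u},\boldsymbol{\varphi}\cdot\mathbf{n}\rangle_{\partial T},
\end{equation*}
and its right-hand side does not vanish in general.

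\textbf{Explicit counterexample.} Take $T=[0,1]^2$, $k=1$, any $r\ge2$ (for instance $r=k-1+N=4$), $p=xy^2$, and $L(y)=y^2-y+\tfrac16$. On the edge $x=1$, the $L^2$ projection of $y^2$ onto $P_1$ is $y-\tfrac16$, so $Q_bp-p=-L$ there. On the other three edges $p$ already lies in $P_1(e)$, so $Q_bp=p$. Let $\mathbf{w}=(L,0)^T\in[P_r(T)]^2$; then $\nabla\cdot\mathbf{w}=0$, and
\begin{equation*}
(\nabla_wQ_hp-\mathcal{Q}_h\nabla p,\mathbf{w})_T=-\int_0^1L(y)^2\,dy=-\tfrac{1}{180}\neq0,
\end{equation*}
which contradicts \eqref{pronew4}. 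The same computation breaks the other identities:
\begin{itemize}
\item with $\mathbf{u}=(xy^2,0)^T$ and the symmetric test function $\boldsymbol{\varphi}=\mathrm{diag}(L,0)$, it violates \eqref{pronew1} and \eqref{pronew3};
\item with $\mathbf{u}=(xy^2,0)^T$ and $w=L$, it violates \eqref{pronew2}.
\end{itemize}

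\textbf{What can be proved instead.} Bound the defect identity using \eqref{tracein}, \eqref{trace}, an inverse inequality, and $\|Q_b\mathbf{u}-\mathbf{u}\|_e\le\|Q_0\mathbf{u}-\mathbf{u}\|_e$. This gives $\|\nabla_wQ_h\mathbf{u}-\mathcal{Q}_h\nabla\mathbf{u}\|_T\le Ch_T^{k}|\mathbf{u}|_{k+1,T}$, with analogous bounds for the other three operators. That is an approximation property, not a commutation property. Any downstream use, such as Lemma \ref{errorequa2}, would therefore have to carry these additional $O(h^k)$ consistency terms.
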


\begin{lemma}\label{errorequa2} 
Let $(\mathbf{u}, p)$ be the exact solutions to the Biot's consolidation model equations \eqref{model1}--\eqref{model2}. For any $\mathbf{v}_h \in V_h^c$ and $q_h \in W_h^t$, the following error equations hold:
\begin{align} 
    a(Q_h \mathbf{u}, \mathbf{v}_h) - b(\mathbf{v}_h, \mathcal{Q}_h p) &= (\mathbf{f}, \mathbf{v}_0) + \langle \boldsymbol{\beta}, \mathbf{v}_b \rangle_{\Gamma_t} + \ell_1(\mathbf{u}, \mathbf{v}_h) + \ell_2(\mathbf{v}_h, p), \label{erroreqn2_1} \\
    -b(\partial_t Q_h \mathbf{u}, q_h) - c(Q_h p, q_h) &= (g, q_0) - \ell_3(p, q_h), \label{erroreqn2_2}
\end{align}
where the linear functionals $\ell_1, \ell_2$, and $\ell_3$ represent the consistency errors defined as: 
$$
\ell_1 (\bu, \bv_h)=\sum_{T\in {\cal T}_h}  
 2\mu\langle \bv_b-\bv_0,  ({\cal Q}_h-I) \epsilon \bu \cdot\bn \rangle_{\partial T}\\
    +\lambda\langle \bv_b-\bv_0,  ({\cal Q}_h-I)\nabla \cdot \bu \cdot\bn \rangle_{\partial T} ,
$$ 
$$
\ell_2 (  \bv_h, p)=\sum_{T\in {\cal T}_h}- \langle ( {\cal Q}_h -I)p, (\bv_b-\bv_0)\cdot \bn \rangle_{\partial T},
$$
$$
\ell_3(p, q_h)=\sum_{T\in {\cal T}_h}- \langle q_b-q_0,  ({\cal Q}_h-I)K\nabla  p \cdot\bn \rangle_{\partial T}.
$$
\end{lemma}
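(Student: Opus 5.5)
We derive each error equation by testing the strong equations \eqref{model1}--\eqref{model2} against the interior component of the discrete test function. We then use the commutative properties of Lemma \ref{Lemma5.1new} to convert the exact-solution terms into discrete weak operators acting on projections. The leftover boundary terms will be exactly the consistency functionals $\ell_1,\ell_2,\ell_3$.

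\textbf{First equation.} We test \eqref{model1} with $\bv_0$ on each $T$ and integrate by parts:
\[
(-\nabla\cdot\bsigma,\bv_0)_T=2\mu(\epsilon\bu,\epsilon\bv_0)_T+\lambda(\nabla\cdot\bu,\nabla\cdot\bv_0)_T-\langle\bsigma\bn,\bv_0\rangle_{\partial T}.
\]
We rewrite the volume terms using \eqref{pronew3} and \eqref{pronew2}, which give $\epsilon_w Q_h\bu=\mathcal Q_h\epsilon\bu$ and $\nabla_w\cdot Q_h\bu=\mathcal Q_h\nabla\cdot\bu$. By the definition \eqref{2.4new}, with the symmetric test tensor $\mathcal Q_h\epsilon\bu\in[P_r(T)]^{d\times d}$ (and likewise \eqref{divnew} for the divergence),
\[
(\epsilon_w Q_h\bu,\epsilon_w\bv)_T=(\epsilon\bv_0,\mathcal Q_h\epsilon\bu)_T+\langle\bv_b-\bv_0,\mathcal Q_h\epsilon\bu\,\bn\rangle_{\partial T}.
\]
Since $\epsilon\bv_0\in P_{k-1}\subset P_r$, we have $(\epsilon\bv_0,\mathcal Q_h\epsilon\bu)_T=(\epsilon\bv_0,\epsilon\bu)_T$. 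Hence $a(Q_h\bu,\bv)$ equals $\sum_T[2\mu(\epsilon\bu,\epsilon\bv_0)_T+\lambda(\nabla\cdot\bu,\nabla\cdot\bv_0)_T]$ plus the boundary terms involving $\mathcal Q_h\epsilon\bu$ and $\mathcal Q_h\nabla\cdot\bu$ against $\bv_b-\bv_0$.

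Next we handle the flux $\langle\bsigma\bn,\bv_0\rangle_{\partial T}$. We write it as $\langle\bsigma\bn,\bv_0-\bv_b\rangle_{\partial T}+\langle\bsigma\bn,\bv_b\rangle_{\partial T}$. Summed over $T$, the $\bv_b$ part collapses: it cancels on interior faces by single-valuedness of $\bv_b$ and continuity of $\bsigma\bn$, vanishes on $\Gamma_c$, and equals $\langle\bbeta,\bv_b\rangle_{\Gamma_t}$ on $\Gamma_t$. Combining these with the boundary terms from the previous step produces $\ell_1$, with the $(\mathcal Q_h-I)$ structure. The pressure term is treated the same way. We have $(\nabla p,\bv_0)_T=-(p,\nabla\cdot\bv_0)_T+\langle p,\bv_0\cdot\bn\rangle_{\partial T}$, and \eqref{divnew} gives
\[
b(\bv,\mathcal Q_hp)=(\nabla\cdot\bv_0,\mathcal Q_hp)+\sum_T\langle(\bv_b-\bv_0)\cdot\bn,\mathcal Q_hp\rangle_{\partial T},
\]
where $(\nabla\cdot\bv_0,\mathcal Q_hp)=(\nabla\cdot\bv_0,p)$. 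The term $\sum_T\langle p,\bv_b\cdot\bn\rangle_{\partial T}$ vanishes because $\bv_b$ is single-valued on interior faces, $p$ is continuous, $p=0$ on $\Gamma_t$, and $\bv_b=0$ on $\Gamma_c$. What remains is exactly $\ell_2$, so the $-b(\bv_h,\mathcal Q_hp)$ term and $\ell_2$ appear as stated.

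\textbf{Second equation.} We test \eqref{model2} with $q_0$ and proceed identically. The time term uses $\nabla_w\cdot Q_h\partial_t\bu=\mathcal Q_h\nabla\cdot\partial_t\bu$, and $\mathcal Q_h$ can be dropped against $q_0\in P_k\subset P_r$. Note that $\partial_t$ commutes with $Q_h$. For the diffusion term we integrate $(\nabla\cdot K\nabla p,q_0)_T$ by parts and use \eqref{pronew4} with \eqref{gradientnew}:
\[
(K\nabla_wQ_hp,\nabla_wq)_T=(\nabla q_0,\mathcal Q_hK\nabla p)_T+\langle q_b-q_0,\mathcal Q_hK\nabla p\cdot\bn\rangle_{\partial T}.
\]
For this we assume $K$ is piecewise constant, so that $K\mathcal Q_h\nabla p=\mathcal Q_h K\nabla p$. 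The flux sum $\sum_T\langle K\nabla p\cdot\bn,q_b\rangle_{\partial T}$ vanishes by flux continuity, $K\nabla p\cdot\bn=0$ on $\Gamma_c$, and $q_b=0$ on $\Gamma_t$. This yields $\ell_3$.

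\textbf{Main obstacle.} The delicate step is the bookkeeping of the boundary terms: checking that after inserting $\pm\bv_b$ (resp.\ $\pm q_b$) the remainders combine exactly into $(\mathcal Q_h-I)$ form with the stated signs, and that the global sums vanish under the mixed boundary conditions. This relies on using symmetric test tensors so that $\epsilon_w$ rather than $\nabla_w$ appears.
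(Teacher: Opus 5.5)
Your route is essentially the paper's. You test the strong equations with $\bv_0$ and $q_0$ and use \eqref{pronew2}--\eqref{pronew4}. You drop $\mathcal{Q}_h$ against $\epsilon\bv_0$, $\nabla\cdot\bv_0$, $\nabla q_0$ and $q_0$, which all have degree at most $k\le r$. You then insert $\pm\bv_b$ and $\pm q_b$ and cancel the face sums using flux continuity and the mixed boundary conditions. Your explicit assumption that $K$ is piecewise constant is also used tacitly by the paper when it takes $\bw=K\mathcal{Q}_h\nabla p$ in \eqref{gradientnew}. The first identity checks out, with $\ell_1$ and $\ell_2$ exactly as defined.

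However, the sign bookkeeping you deferred does not close for the second identity as stated. Carrying it out gives
\[
c(Q_hp,q_h)=-(\nabla\cdot(K\nabla p),q_0)+\sum_{T\in\mathcal{T}_h}\langle q_b-q_0,(\mathcal{Q}_h-I)K\nabla p\cdot\bn\rangle_{\partial T}=-(\nabla\cdot(K\nabla p),q_0)-\ell_3(p,q_h).
\]
Hence $-b(\partial_tQ_h\bu,q_h)-c(Q_hp,q_h)=(g,q_0)+\ell_3(p,q_h)$, not $(g,q_0)-\ell_3(p,q_h)$. The paper's own proof ends with $(g,q_0)-\sum_T\langle q_b-q_0,(\mathcal{Q}_h-I)K\nabla p\cdot\bn\rangle_{\partial T}$, which is also $+\ell_3$ under the stated definition. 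So this is a sign typo in the statement (either in the definition of $\ell_3$ or in \eqref{erroreqn2_2}), not a flaw in your method. It is harmless for \eqref{es31}, which only bounds $|\ell_3|$. Still, you should record the corrected sign rather than assert ``this yields $\ell_3$''. The definition of the elliptic projection $\bar p_h$ then needs the matching sign: $c(\bar p_h,q_h)=c(Q_hp,q_h)+\ell_3(p,q_h)$.
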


\begin{proof}

Using \eqref{pronew3}, standard  integration by parts, and setting $\bvarphi= {\cal Q}_h \epsilon \bu$ in  \eqref{2.4new}, we obtain 
\begin{equation*} 
\begin{split}
&\sum_{T\in {\cal T}_h} 2\mu (\epsilon_w Q_h\bu, \epsilon_w\bv_h)_T\\
=&\sum_{T\in {\cal T}_h} 2\mu ({\cal Q}_h(\epsilon \bu), \epsilon_w\bv_h)\\
=&\sum_{T\in {\cal T}_h} 2\mu(
\epsilon \bv_0, {\cal Q}_h(\epsilon \bu))_T+2\mu\langle \bv_b-\bv_0,  {\cal Q}_h \epsilon \bu \cdot\bn \rangle_{\partial T}\\
=&\sum_{T\in {\cal T}_h} 2\mu(
\epsilon \bv_0,  \epsilon \bu)_T+2\mu\langle \bv_b-\bv_0,  {\cal Q}_h \epsilon \bu \cdot\bn \rangle_{\partial T}\\
=&\sum_{T\in {\cal T}_h} -2\mu(\bv_0, \nabla\cdot(\epsilon \bu))+2\mu\langle  \epsilon \bu\cdot\bn, \bv_0\rangle_{\partial T}+2\mu\langle \bv_b-\bv_0,  {\cal Q}_h \epsilon \bu \cdot\bn \rangle_{\partial T}\\
=&\sum_{T\in {\cal T}_h} - (\bv_0, \nabla\cdot(2\mu\epsilon \bu))+ 2\mu\langle \bv_b-\bv_0,  ({\cal Q}_h-I) \epsilon \bu \cdot\bn \rangle_{\partial T}+2\mu\langle \epsilon\bu\cdot\bn, \bv_b\rangle_{\Gamma_t},
\end{split}
\end{equation*}
where we used $\sum_{T\in {\cal T}_h} \langle \epsilon\bu\cdot\bn, \bv_b\rangle_{\partial T}=\langle \epsilon\bu\cdot\bn, \bv_b\rangle_{\partial \Omega}= \langle \epsilon\bu\cdot\bn, \bv_b\rangle_{\Gamma_t}$ since  $\bv_b=0$ on $\Gamma_c$.

Using \eqref{pronew2}, standard  integration by parts, and setting $\bvarphi= {\cal Q}_h \epsilon \bu$ in  \eqref{divnew}, we obtain 
\begin{equation*} 
\begin{split}
&\sum_{T\in {\cal T}_h} \lambda (\nabla_w\cdot Q_h\bu, \nabla_w\cdot\bv_h)_T\\
=&\sum_{T\in {\cal T}_h}  \lambda ({\cal Q}_h(\nabla\cdot\bu), \nabla_w \cdot\bv_h)\\
=&\sum_{T\in {\cal T}_h} \lambda(
\nabla\cdot \bv_0, {\cal Q}_h(\nabla \cdot\bu))_T+\lambda\langle \bv_b-\bv_0,  {\cal Q}_h \nabla\cdot \bu \cdot\bn \rangle_{\partial T}\\
=&\sum_{T\in {\cal T}_h} \lambda(
\nabla \cdot \bv_0,  \nabla \cdot \bu)_T+\lambda\langle \bv_b-\bv_0,  {\cal Q}_h \nabla \cdot \bu \cdot\bn \rangle_{\partial T}\\
=&\sum_{T\in {\cal T}_h} -\lambda(\bv_0, \nabla\cdot(\nabla \cdot \bu))+\lambda\langle  \nabla \cdot \bu\cdot\bn, \bv_0\rangle_{\partial T}+\lambda\langle \bv_b-\bv_0,  {\cal Q}_h \nabla \cdot\bu \cdot\bn \rangle_{\partial T}\\
=&\sum_{T\in {\cal T}_h} - (\bv_0, \nabla\cdot(\lambda\nabla \cdot \bu))+\lambda\langle \bv_b-\bv_0,  ({\cal Q}_h-I)\nabla \cdot \bu \cdot\bn \rangle_{\partial T}+\lambda\langle  \nabla \cdot\bu\cdot\bn, \bv_b\rangle_{\Gamma_t},
\end{split}
\end{equation*}
where we used $\sum_{T\in {\cal T}_h} \langle \nabla \cdot\bu\cdot\bn, \bv_b\rangle_{\partial T}=\langle  \nabla \cdot\bu\cdot\bn, \bv_b\rangle_{\partial \Omega}=\langle  \nabla \cdot\bu\cdot\bn, \bv_b\rangle_{\Gamma_t}$ since  $\bv_b=0$ on $\Gamma_c$.

Using standard integration by parts and setting  $w={\cal Q}_h  p$ in  \eqref{divnew}, we obtain 
 \begin{equation*}\label{termm}
     \begin{split}
  & 
  \sum_{T\in {\cal T}_h} (\nabla_w \cdot \bv_h, {\cal Q}_h p)_T \\=& \sum_{T\in {\cal T}_h} (\nabla \cdot \bv_0,  {\cal Q}_h p)_T + \langle {\cal Q}_h p, (\bv_b-\bv_0)\cdot \bn \rangle_{\partial T}\\
  =& \sum_{T\in {\cal T}_h} (\nabla \cdot \bv_0,    p)_T + \langle {\cal Q}_h p, (\bv_b-\bv_0)\cdot \bn \rangle_{\partial T}\\
  =& \sum_{T\in {\cal T}_h}-( \bv_0,    \nabla p)_T 
+\langle p, \bv_0\cdot\bn\rangle_{\partial T}+ \langle {\cal Q}_h p, (\bv_b-\bv_0)\cdot \bn \rangle_{\partial T}\\    =& \sum_{T\in {\cal T}_h}-( \bv_0, \nabla p)_T + \langle ( {\cal Q}_h -I)p, (\bv_b-\bv_0)\cdot \bn \rangle_{\partial T},
\end{split}
 \end{equation*}
where we used   $\sum_{T\in {\cal T}_h} \langle p, \bv_b\cdot\bn\rangle_{\partial T}=\langle p, \bv_b\cdot\bn\rangle_{\partial \Omega}=0$ since $\bv_b=0$ on $\Gamma_c$ and $p=0$ on $\Gamma_t$.

Adding the above three equations and using \eqref{model1},  we have
\begin{equation*}
  \begin{split}
   &a(Q_h\bu, \bv_h)-b(\bv_h, {\cal Q}_h p) \\
   =&  \sum_{T\in {\cal T}_h} - (\bv_0, \nabla\cdot(2\mu\epsilon \bu))+ 2\mu\langle \bv_b-\bv_0,  ({\cal Q}_h-I) \epsilon \bu \cdot\bn \rangle_{\partial T}+2\mu\langle \epsilon\bu\cdot\bn, \bv_b\rangle_{\Gamma_t}\\
   &- (\bv_0, \nabla\cdot(\lambda\nabla \cdot \bu))+\lambda\langle \bv_b-\bv_0,  ({\cal Q}_h-I)\nabla \cdot \bu \cdot\bn \rangle_{\partial T}+\lambda\langle  \nabla \cdot\bu\cdot\bn, \bv_b\rangle_{\Gamma_t}\\
    &+( \bv_0, \nabla p)_T - \langle ( {\cal Q}_h -I)p, (\bv_b-\bv_0)\cdot \bn \rangle_{\partial T}\\
    =&(\bf, \bv_0)+\langle \bbeta, \bv_b\rangle_{\Gamma_t}+2\mu\langle \bv_b-\bv_0,  ({\cal Q}_h-I) \epsilon \bu \cdot\bn \rangle_{\partial T}\\
    &+\lambda\langle \bv_b-\bv_0,  ({\cal Q}_h-I)\nabla \cdot \bu \cdot\bn \rangle_{\partial T} - \langle ( {\cal Q}_h -I)p, (\bv_b-\bv_0)\cdot \bn \rangle_{\partial T}.
  \end{split}  
\end{equation*} This completes the proof of  \eqref{erroreqn2_1}.

Using \eqref{pronew2}, we obtain 
 \begin{equation*} 
     \begin{split}
  &\sum_{T\in {\cal T}_h} (\nabla_w \cdot \partial_t Q_h\bu, q_0)_T = 
  \sum_{T\in {\cal T}_h} ({\cal Q}_h(\nabla  \cdot  \partial_t \bu),  q_0)_T  =    \sum_{T\in {\cal T}_h} ( \nabla  \cdot  \partial_t \bu,  q_0)_T.
\end{split}
 \end{equation*}

 Using \eqref{pronew4}, standard  integration by parts, and setting $\bw= K{\cal Q}_h(\nabla p)$ in  \eqref{gradientnew}, we obtain 
\begin{equation*} 
\begin{split}
&\sum_{T\in {\cal T}_h}   (K\nabla_w Q_h p, \nabla_wq_h)_T\\
=&\sum_{T\in {\cal T}_h}    (K{\cal Q}_h(\nabla p), \nabla_w q_h)\\
=&\sum_{T\in {\cal T}_h} (
\nabla q_0, K{\cal Q}_h(\nabla p))_T+ \langle q_b-q_0, K {\cal Q}_h \nabla p \cdot\bn \rangle_{\partial T}\\
=&\sum_{T\in {\cal T}_h} (
\nabla q_0,  K\nabla p)_T+ \langle q_b-q_0,  K{\cal Q}_h \nabla p \cdot\bn \rangle_{\partial T}\\
=&\sum_{T\in {\cal T}_h} - (q_0, \nabla\cdot(K \nabla p))+ \langle  K\nabla  p\cdot\bn, q_0\rangle_{\partial T}+ \langle q_b-q_0,  K{\cal Q}_h \nabla  p \cdot\bn \rangle_{\partial T}\\
=&\sum_{T\in {\cal T}_h}  - (q_0, \nabla\cdot(K \nabla p))+ \langle q_b-q_0,  ({\cal Q}_h-I)K\nabla  p \cdot\bn \rangle_{\partial T},
\end{split}
\end{equation*}
where we used $\sum_{T\in {\cal T}_h} \langle K \nabla p\cdot\bn, q_b\rangle_{\partial T}=\langle  K\nabla p\cdot\bn, q_b\rangle_{\partial \Omega}=\langle  K \nabla p\cdot\bn, q_b\rangle_{\Gamma_c}=0$ since  $q_b=0$ on $\Gamma_t$ and $K\nabla p\cdot\bn=0$ on $\Gamma_c$.

Adding the above two equations gives
\begin{equation*} 
\begin{split}
&\sum_{T\in {\cal T}_h}- (\nabla_w \cdot \partial_t Q_h\bu, q_0)_T-(K\nabla_w Q_h p, \nabla_wq_h)_T
\\ =&\sum_{T\in {\cal T}_h}  -( \nabla  \cdot  \partial_t \bu,  q_0)_T  + (q_0, \nabla\cdot(K \nabla p))- \langle q_b-q_0,  ({\cal Q}_h-I)K\nabla  p \cdot\bn \rangle_{\partial T}\\
=& (g, q_0)- \langle q_b-q_0,  ({\cal Q}_h-I)K\nabla  p \cdot\bn \rangle_{\partial T},
\end{split}
\end{equation*}
where we used \eqref{model2}.
This completes  the second equation of \eqref{erroreqn2_2}.

This concludes the proof.

\end{proof}

\section{Error Estimates} 

\begin{lemma}\cite{wg21}\label{lem}
Let $\mathcal{T}_h$ be a finite element partition of the domain $\Omega$ satisfying the shape-regularity assumptions specified in \cite{wy3655}. For any $0 \leq s \leq 1$, $1 \leq m \leq k$, and $1 \leq n \leq 2N+k-1$, the following estimates hold: 
\begin{eqnarray}
\label{error1}
 \sum_{T\in {\cal T}_h} h_T^{2s}\|({\cal Q}_h-I)p\|^2_{s,T}&\leq& C  h^{2(n+1)}\|p\|^2_{n+1},\\
\label{error2}
\sum_{T\in {\cal T}_h}h_T^{2s}\|\bu- Q _0\bu\|^2_{s,T}&\leq& C h^{2(m+1)}\|\bu\|^2_{m+1},\\
\label{error3}\sum_{T\in {\cal T}_h}h_T^{2s}\|\nabla\bu-{\cal Q}_h(\nabla\bu)\|^2_{s,T}&\leq& C h^{2n}\|\bu\|^2_{n+1}.
\end{eqnarray}
 \end{lemma}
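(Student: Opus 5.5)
The plan is to prove all three estimates element by element and then sum over $T\in{\cal T}_h$. On each element the argument is a Bramble--Hilbert (Deny--Lions) bound combined with a scaling argument. These hold on shape-regular polytopal partitions in the sense of \cite{wy3655}, including non-convex elements. The reason is that each $T$ is assumed to be star-shaped with respect to a ball of radius comparable to $h_T$ and to admit a shape-regular sub-triangulation, which gives a uniform polynomial approximation constant.

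\textbf{Step 1: near-best approximation.} Let $\Pi$ be any $L^2(T)$ projection onto $P_j(T)$, with $j=k$ for $Q_0$ and $j=r$ for ${\cal Q}_h$. For any polynomial $\chi\in P_j(T)$ we have $(I-\Pi)v=(I-\Pi)(v-\chi)$, and $\Pi$ is $L^2$-stable. This gives
\[
\|(I-\Pi)v\|_T\le \|v-\chi\|_T .
\]
For the $H^1$ seminorm ($s=1$), split
\[
(I-\Pi)v=(v-\chi)-\Pi(v-\chi).
\]
Apply the inverse inequality $|\Pi w|_{1,T}\le Ch_T^{-1}\|\Pi w\|_T\le Ch_T^{-1}\|w\|_T$ to the second term. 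The result is
\[
h_T|(I-\Pi)v|_{1,T}\le C\bigl(\|v-\chi\|_T+h_T|v-\chi|_{1,T}\bigr).
\]
The intermediate case $0<s<1$ follows by interpolation between $s=0$ and $s=1$.

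\textbf{Step 2: choice of $\chi$.} Take $\chi$ to be the averaged Taylor polynomial of degree $j$ over the ball associated with $T$. The Bramble--Hilbert lemma then gives
\[
\|v-\chi\|_T+h_T|v-\chi|_{1,T}\le Ch_T^{\ell+1}|v|_{\ell+1,T}, \qquad \ell\le j.
\]
Applying this with $v=p$ and $\ell=n$ (which requires $n\le r$) yields \eqref{error1}. Applying it with $v=\bu$, $\Pi=Q_0$ and $\ell=m\le k$ yields \eqref{error2}. For \eqref{error3}, apply it componentwise to $v=\nabla\bu\in[H^{n}(T)]^{d\times d}$ with $\ell=n-1\le r$, which gives order $h_T^{n}|\bu|_{n+1,T}$. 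Squaring and summing over $T$, and using $h_T\le h$, completes the argument.

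\textbf{Main obstacle.} The delicate point is the range of $n$ in the statement. The condition $n\le 2N+k-1$ equals $r$ only for non-convex elements. In \eqref{error1}, reaching order $h^{n+1}$ requires degree $n$, so $n\le r$, which holds whenever $r=k-1+2N$. For convex elements, where $r=k-1+N$, the estimate must be read with $n\le r$ locally. I would state this and, where needed, treat the bound as valid for $n\le\min_T r_T$.

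The other point to check is that the Bramble--Hilbert and inverse-inequality constants are uniform over non-convex polytopes. This follows from the chunkiness-parameter bound implied by the shape-regularity assumptions of \cite{wy3655}, which is the reason the lemma is cited from \cite{wg21}.
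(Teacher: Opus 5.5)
The paper gives no proof of this lemma; it quotes it from \cite{wg21}. The benchmark is therefore the standard argument under the shape-regularity assumptions of \cite{wy3655}. Three parts of your proposal are fine: Step 1, the final summation, and your remark on the range of $n$. As written, \eqref{error1} needs $n\le r_T$ and \eqref{error3} needs $n\le r_T+1$ on every element. On convex elements, where $r_T=k-1+N$, the stated range $n\le 2N+k-1$ is therefore too large. The paper only uses $n=k$, where this is harmless.

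\textbf{The gap is in Step 2.} You justify the uniform elementwise Bramble--Hilbert bound by asserting that the shape-regularity of \cite{wy3655} makes every element star-shaped with respect to a ball of radius comparable to $h_T$, i.e.\ gives a bounded chunkiness parameter. It does not, and non-convex elements are exactly the case this paper is about. Those assumptions consist of measure bounds on $T$ and its faces, a pyramid of height $\sim h_T$ inside $T$ over each face, and a shape-regular circumscribed simplex $S(T)\supset T$ with bounded overlap. They are satisfied, for example, by the U-shaped octagon with vertices $(0,0),(3,0),(3,3),(2,3),(2,1),(1,1),(1,3),(0,3)$. This polygon is star-shaped with respect to no point at all: the inner half-planes of its two interior vertical edges require $x\ge 2$ and $x\le 1$. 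On such a $T$ no ball $B$ exists for which the averaged-Taylor-polynomial argument works, since that argument integrates along segments from points of $B$ to $x$ that must stay in $T$. So the bound $\|v-\chi\|_T+h_T|v-\chi|_{1,T}\le Ch_T^{\ell+1}|v|_{\ell+1,T}$ is not justified by your argument, much less with a mesh-uniform constant.

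The repair is standard, but it has to be made; either of two routes works.
\begin{enumerate}
\item Take a Stein extension $\tilde v$ of $v$ with $\|\tilde v\|_{\ell+1,\mathbb{R}^d}\le C\|v\|_{\ell+1,\Omega}$. Apply Bramble--Hilbert on the convex, shape-regular simplex $S(T)$, whose diameter is at most $\gamma_* h_T$, and restrict to $T\subset S(T)$ via $\|(I-\Pi)v\|_T\le\|\tilde v-\chi\|_{S(T)}$ and $h_T|v-\chi|_{1,T}\le h_T|\tilde v-\chi|_{1,S(T)}$. Then sum over $T$ using the bounded overlap of the simplices $S(T)$. This is what the circumscribed-simplex assumption of \cite{wy3655} provides. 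It also explains why the right-hand sides of the lemma carry global norms $\|\cdot\|_{n+1}$ rather than broken seminorms.
\item Alternatively, add the assumption of a shape-regular subtriangulation of each $T$ with a uniformly bounded number of simplices. Then invoke the Dupont--Scott form of the Bramble--Hilbert lemma, which holds for domains that are a connected chain of finitely many pieces, each star-shaped with respect to a ball.
\end{enumerate}

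Your inverse inequality in Step 1 is true, but your chunkiness-parameter justification for it has the same flaw. It needs no star-shapedness: it suffices that $T$ contains a ball of radius $\ge c\,h_T$, which the face pyramids supply, that $T\subset B(x_0,h_T)$ for any $x_0\in T$, and that polynomial norms are equivalent on concentric balls with bounded radius ratio.
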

 \begin{lemma} 
If   $\bu\in [H^{k+1}(\Omega)]^d$, then there exists a constant 
$C$
 such that 
\begin{equation}\label{erroresti1}
\3bar \bu-Q_h\bu \3bar_{V_h} \leq Ch^{k}\|\bu\|_{k+1}.
\end{equation}
\end{lemma}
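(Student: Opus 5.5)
The most direct route uses the commutative properties of Lemma \ref{Lemma5.1new}. Since $\epsilon_w$ and $\nabla_w\cdot$ are linear, \eqref{pronew3} and \eqref{pronew2} give, on each $T\in\T_h$,
\[
\epsilon_w(\bu - Q_h\bu) = \epsilon_w \bu - {\cal Q}_h(\epsilon\bu), \qquad \nabla_w\cdot(\bu-Q_h\bu)=\nabla_w\cdot\bu-{\cal Q}_h(\nabla\cdot\bu).
\]
Here $\bu$ is regarded as the weak function $\{\bu|_T,\bu|_{\partial T}\}$. For such a $\bu$, \eqref{2.4new} shows that $\nabla_w\bu={\cal Q}_h\nabla\bu$, since the boundary term vanishes. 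Thus $\epsilon_w\bu={\cal Q}_h\epsilon\bu$, which would make the difference zero. This is too trivial to be the intended statement. I would therefore read $\bu-Q_h\bu$ in the energy norm as the comparison between the continuous quantities $\epsilon\bu$, $\nabla\cdot\bu$ and their discrete counterparts $\epsilon_w Q_h\bu$, $\nabla_w\cdot Q_h\bu$.

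Under this reading the quantity to bound is
\[
\sum_{T}2\mu\|\epsilon\bu-\epsilon_wQ_h\bu\|_T^2+\lambda\|\nabla\cdot\bu-\nabla_w\cdot Q_h\bu\|_T^2
=\sum_T 2\mu\|({\cal Q}_h-I)\epsilon\bu\|_T^2+\lambda\|({\cal Q}_h-I)\nabla\cdot\bu\|_T^2 .
\]
Both $\epsilon\bu$ and $\nabla\cdot\bu$ are linear combinations of the entries of $\nabla\bu$, and ${\cal Q}_h$ acts componentwise. Hence each term is bounded by $C\|\nabla\bu-{\cal Q}_h\nabla\bu\|_T^2$. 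Applying \eqref{error3} with $s=0$ and $n=k$ gives a bound of $Ch^{2k}\|\bu\|_{k+1}^2$. The choice $n=k$ is admissible because $k\le 2N+k-1$ whenever $N\ge1$. Taking square roots proves \eqref{erroresti1}.

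For robustness, I would also verify the alternative interpretation through the norm equivalence \eqref{normeq}, applied to the discrete error $Q_h\bu-\bu_I$ for any interpolant. In that case the work is estimating $\|\epsilon(\bu-Q_0\bu)\|_T$ and $h_T^{-1/2}\|Q_0\bu-Q_b\bu\|_{\partial T}$. The first is bounded directly by \eqref{error2} with $s=1$ and $m=k$. For the second, I would write $Q_0\bu-Q_b\bu=Q_b(Q_0\bu-\bu)$ on each face and bound it by $\|Q_0\bu-\bu\|_{\partial T}$. The trace inequality \eqref{tracein} then gives $h_T^{-1}\|Q_0\bu-\bu\|_{\partial T}^2\le C(h_T^{-2}\|\bu-Q_0\bu\|_T^2+\|\nabla(\bu-Q_0\bu)\|_T^2)$, which \eqref{error2} bounds by $Ch^{2k}\|\bu\|_{k+1}^2$.

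The main obstacle is interpretive: making sense of the weak operators applied to the continuous $\bu$, that is, identifying $\epsilon_w\bu$ with ${\cal Q}_h\epsilon\bu$ versus with $\epsilon\bu$ itself. Once this is fixed, the estimate reduces to the projection bound \eqref{error3}. I also need to treat $\lambda$ and $\mu$ as fixed constants absorbed into $C$.
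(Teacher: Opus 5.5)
\textbf{There is a genuine gap: your main argument proves a different inequality from the one stated.}

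The lemma is about the weak function $\bu-Q_h\bu=\{\bu-Q_0\bu,\ \bu|_{\partial T}-Q_b\bu\}$. Your observation $\epsilon_w\bu={\cal Q}_h\epsilon\bu$ is correct, and it gives $\epsilon_w(\bu-Q_h\bu)={\cal Q}_h\epsilon\bu-\epsilon_wQ_h\bu$. So the lemma is a quantitative substitute for the commuting identity \eqref{pronew3}, not a triviality. The paper lists \eqref{pronew2}--\eqref{pronew3}, but its proof of this lemma does not use them, and they fail for the auto-stabilized degree. For symmetric $\bvarphi\in[P_r(T)]^{d\times d}$,
\[
(\epsilon_wQ_h\bu-{\cal Q}_h\epsilon\bu,\bvarphi)_T=(\bu-Q_0\bu,\nabla\cdot\bvarphi)_T-\langle \bu-Q_b\bu,\bvarphi\cdot\bn\rangle_{\partial T}.
\]
With $r=k-1+N$ or $k-1+2N$ we have $r\ge k+2$ on polytopes in $d\ge2$. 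Then neither $\nabla\cdot\bvarphi\in[P_k(T)]^d$ nor $\bvarphi\cdot\bn|_e\in[P_k(e)]^d$ holds in general, so the right-hand side does not vanish. Your reinterpreted quantity splits orthogonally as
\[
\|\epsilon\bu-\epsilon_wQ_h\bu\|_T^2=\|({\cal Q}_h-I)\epsilon\bu\|_T^2+\|{\cal Q}_h\epsilon\bu-\epsilon_wQ_h\bu\|_T^2 .
\]
By invoking \eqref{pronew3} you drop exactly the second term, which is the one the lemma is about.

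\textbf{How the paper proves it.} It works directly from \eqref{2.4new} applied to $\bu-Q_h\bu$:
\[
(\epsilon_w(\bu-Q_h\bu),\bvarphi)_T=(\epsilon(\bu-Q_0\bu),\bvarphi)_T-\langle Q_b\bu-Q_0\bu,\bvarphi\cdot\bn\rangle_{\partial T}.
\]
It then uses Cauchy--Schwarz, \eqref{trace} on $\bvarphi$, \eqref{tracein} on $\bu-Q_0\bu$, and \eqref{error2} with $m=k$, $s=0,1$. Choosing $\bvarphi=\epsilon_w(\bu-Q_h\bu)$ finishes the strain term, and the $\lambda$-term is handled the same way via \eqref{divnew}.

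\textbf{How to repair your proposal.} Your ``alternative'' paragraph already contains both estimates the paper needs, including the correct step $Q_b\bu-Q_0\bu=Q_b(\bu-Q_0\bu)$ on each face. However, you attach them to an unspecified $Q_h\bu-\bu_I$ and route them through \eqref{normeq}. That norm equivalence is stated only on $V_h$, and $\bu-Q_h\bu\notin V_h$. All you need is the one-sided bound
\[
\|\epsilon_w\bw\|_T\le\|\epsilon\bw_0\|_T+Ch_T^{-1/2}\|\bw_b-\bw_0\|_{\partial T},
\]
together with its divergence analogue. This holds for any $\bw$ with $\bw_0\in[H^1(T)]^d$, because the polynomial trace inequality is applied only to the test function. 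Taking $\bw=\bu-Q_h\bu$ and inserting your two estimates gives exactly the paper's proof.
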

\begin{proof}
Using the property \eqref{2.4new}, the trace inequalities \eqref{tracein} and \eqref{trace}, the Cauchy--Schwarz inequality, and the estimate \eqref{error2} for $m=k$ and $s=0, 1$, we derive that for any $\boldsymbol{\varphi} \in [P_r(T)]^{d \times d}$:
\begin{equation*}
\begin{split}
&|\sum_{T\in {\cal T}_h} (\epsilon_w (\bu-Q_h\bu), \bvarphi)_T|\\
 =&  | \sum_{T\in {\cal T}_h} (\epsilon(\bu-Q_0\bu), \bvarphi)_T
        -\langle Q_b\bu-Q_0\bu, \bvarphi  \cdot\bn\rangle_{\partial T}|\\
\leq & (\sum_{T\in {\cal T}_h} \|\epsilon (\bu-Q_0\bu)\|_T )^{\frac{1}{2}}
       (\sum_{T\in {\cal T}_h}\|\bvarphi\|_T^2)^{\frac{1}{2}}\\
 & +(\sum_{T\in {\cal T}_h} \| Q_b\bu-Q_0\bu\|_{\partial T} ^2)^{\frac{1}{2}} 
      (\sum_{T\in {\cal T}_h}\|\bvarphi \cdot\bn\|_{\partial T}^2)^{\frac{1}{2}}\\
 \leq & (\sum_{T\in {\cal T}_h} \|\epsilon (\bu-Q_0\bu)\|_T )^{\frac{1}{2}}(\sum_{T\in {\cal T}_h}\|\bvarphi\|_T^2)^{\frac{1}{2}}
\\&+(\sum_{T\in {\cal T}_h} h_T^{-1}\| \bu-Q_0\bu\|_{ T} ^2+h_T \| \bu-Q_0\bu\|_{1,T} ^2)^{\frac{1}{2}} (\sum_{T\in {\cal T}_h}h_T^{-1}\|\bvarphi \|_{T}^2)^{\frac{1}{2}}\\
\leq & Ch^k\|\bu\|_{k+1} (\sum_{T\in {\cal T}_h} \|\bvarphi \|_{T}^2)^{\frac{1}{2}}.
\end{split}
\end{equation*}
Letting $\bvarphi=\epsilon_w (\bu-Q_h\bu)$ yields 
$$
\sum_{T\in {\cal T}_h} 2\mu (\epsilon_w (\bu-Q_h\bu), \epsilon_w (\bu-Q_h\bu))_T\leq 
 Ch^{2k}\|\bu\|^2_{k+1}.$$  

 Similarly, we have 
 $$
\sum_{T\in {\cal T}_h} \lambda (\nabla_w (\bu-Q_h\bu), \nabla_w (\bu-Q_h\bu))_T\leq 
 Ch^{2k}\|\bu\|^2_{k+1}.$$
 
 Combining the above two equations  completes the proof.
 
\end{proof}

 \begin{lemma} 
If   $p\in H^{k+1}(\Omega)$, then there exists a constant 
$C$
 such that 
\begin{equation}\label{erroresti12}
\3bar p-Q_hp \3bar_{W_h} \leq Ch^{k}\|p\|_{k+1}.
\end{equation}
\end{lemma}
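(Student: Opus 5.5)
The plan is to repeat, for the scalar pressure, the argument just given for \eqref{erroresti1}. The difference is that the weak gradient \eqref{gradientnew} now plays the role that \eqref{2.4new} played for the strain tensor. Here $p - Q_h p$ denotes the weak function $\{p - Q_0 p,\, p - Q_b p\}$, so that $\nabla_w(p-Q_hp)$ is well defined through \eqref{gradientnew}, with $p$ itself read as the weak function $\{p, p|_{\partial T}\}$.

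The first step is to fix an arbitrary $\bw \in [P_r(T)]^d$ on each element and apply \eqref{gradientnew} elementwise:
\[
(\nabla_w(p-Q_hp),\bw)_T=(\nabla(p-Q_0p),\bw)_T-\langle Q_bp-Q_0p,\bw\cdot\bn\rangle_{\partial T}.
\]
This identity holds because the boundary component of $p$ coincides with its trace, so the jump term reduces to $\langle (p-Q_bp)-(p-Q_0p),\bw\cdot\bn\rangle_{\partial T}$. Since $\bw\cdot\bn|_e$ need not lie in $P_k(e)$, I will not remove $Q_b$. Instead I will bound the edge term directly via
\[
\|Q_bp-Q_0p\|_{\partial T}=\|Q_b(p-Q_0p)\|_{\partial T}\le\|p-Q_0p\|_{\partial T},
\]
which uses that $Q_b$ is an $L^2(e)$-contraction and reproduces the polynomial $Q_0p|_e\in P_k(e)$.

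The second step is to sum over $T$ and apply Cauchy--Schwarz to both terms. For the edge term, the trace inequality \eqref{tracein} applied to $p-Q_0p$ gives $h_T^{-1}\|p-Q_0p\|_T^2+h_T\|p-Q_0p\|_{1,T}^2$. The polynomial trace inequality \eqref{trace} applied to $\bw$ gives $h_T^{-1}\|\bw\|_T^2$. The two factors of $h_T^{\mp1/2}$ then combine so that no powers of $h$ are lost. Invoking \eqref{error2} (its scalar analogue for $Q_0$) with $m=k$ and $s=0,1$ then yields
\[
\Big|\sum_T(\nabla_w(p-Q_hp),\bw)_T\Big|\le Ch^k\|p\|_{k+1}\Big(\sum_T\|\bw\|_T^2\Big)^{1/2}.
\]

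The final step is to choose $\bw=K\nabla_w(p-Q_hp)$, which is legitimate if $K$ is piecewise constant. Dividing through and using the boundedness of $K$ then gives \eqref{erroresti12}. If $K$ is a general tensor, I would instead take $\bw=\nabla_w(p-Q_hp)$ and absorb the bounds on $K$ into $C$.

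The only mildly delicate point is the bookkeeping of the $h_T$ powers in the trace inequalities, together with the fact that the constants in \eqref{trace} must hold for polynomials of the high degree $r$. The latter is guaranteed by the shape-regularity assumptions of \cite{wy3655}, with $r$ fixed by $N$ and $k$.
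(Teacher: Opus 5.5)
Your proposal is correct and follows the paper's proof essentially step for step. You apply \eqref{gradientnew} elementwise, bound the edge term with the trace inequalities, Cauchy--Schwarz and \eqref{error2} (with $m=k$, $s=0,1$), and then test with $\bw=\nabla_w(p-Q_hp)$. Your explicit use of the $L^2(e)$-contraction of $Q_b$ and your remark on how to absorb $K$ simply make precise two steps the paper leaves implicit.
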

\begin{proof}
Using the property \eqref{gradientnew}, the trace inequalities \eqref{tracein} and \eqref{trace}, the Cauchy--Schwarz inequality, and the estimate \eqref{error2} for $m=k$ and $s =0, 1$, we derive that for any $\mathbf{w} \in [P_r(T)]^d$:
\begin{equation*}
\begin{split}
&|\sum_{T\in {\cal T}_h} (\nabla_w (p-Q_h p), \bw)_T|\\=&  | \sum_{T\in {\cal T}_h} (\nabla(p-Q_0p), \bw)_T-\langle Q_b p-Q_0p, \bw \cdot\bn\rangle_{\partial T}|\\
\leq & (\sum_{T\in {\cal T}_h} \|\nabla (p-Q_0p)\|_T )^{\frac{1}{2}}(\sum_{T\in {\cal T}_h}\|\bw\|_T^2)^{\frac{1}{2}}\\
 & +(\sum_{T\in {\cal T}_h} \| Q_bp-Q_0p\|_{\partial T} ^2)^{\frac{1}{2}} (\sum_{T\in {\cal T}_h}\|\bw \cdot\bn\|_{\partial T}^2)^{\frac{1}{2}}\\
 \leq & (\sum_{T\in {\cal T}_h} \|\nabla(p-Q_0p)\|_T )^{\frac{1}{2}}(\sum_{T\in {\cal T}_h}\|\bw\|_T^2)^{\frac{1}{2}}\\
&+(\sum_{T\in {\cal T}_h} h_T^{-1}\|p-Q_0p\|_{T} ^2+h_T \|p-Q_0p\|_{1, T} ^2)^{\frac{1}{2}} (\sum_{T\in {\cal T}_h}h_T^{-1}\|\bw \cdot\bn\|_{  T}^2)^{\frac{1}{2}}\\
\leq & Ch^{k}\|p\|_{k+1} (\sum_{T\in {\cal T}_h} \|\bw \|_{T}^2)^{\frac{1}{2}}.
\end{split}
\end{equation*}
Letting $\bw=\nabla_w (p-Q_hp)$ yields 
$$
\sum_{T\in {\cal T}_h} K(\nabla_w (p-Q_h p), \nabla_w (p-Q_h p))_T\leq 
 Ch^{2k}\|p\|^2_{k+1}.$$ 

 This completes the proof. 
\end{proof}
\begin{lemma}
For any $\bu\in [H^{k+1}(\Omega)]^d$, $q\in H^{k+1}(\Omega)$, $\bv_h\in V_h$ and $q_h \in W_h$, the following estimates hold:
\begin{equation}\label{es11}
   |\ell_1(\bu, \bv_h)| \leq  Ch^k \|\bu\|_{k+1}\3bar \bv_h \3bar_{V_h}, 
\end{equation}
\begin{equation}\label{es21}
   |\ell_2(\bv_h, p)| \leq  Ch^{k+1} \|p\|_{k+1}\3bar \bv_h \3bar_{V_h},
\end{equation}
\begin{equation}\label{es31}
   |\ell_3(p, q_h)| \leq  Ch^{k} \|p\|_{k+1}\3bar q_h \3bar_{W_h}.
\end{equation}

\end{lemma}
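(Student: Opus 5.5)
Each estimate follows the same three-step pattern: Cauchy--Schwarz on every $\partial T$, a trace-plus-approximation bound on the projection error, and the norm equivalences \eqref{normeq} and \eqref{normeq8} to absorb the jump $\bv_b-\bv_0$ (or $q_b-q_0$).

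\textbf{Estimate \eqref{es11}.} For $\ell_1$ I first apply Cauchy--Schwarz elementwise and then over the mesh:
$$|\ell_1(\bu,\bv_h)|\le C\Big(\sum_{T} h_T\|({\cal Q}_h-I)\epsilon\bu\|_{\partial T}^2+h_T\|({\cal Q}_h-I)\nabla\cdot\bu\|_{\partial T}^2\Big)^{1/2}\Big(\sum_T h_T^{-1}\|\bv_b-\bv_0\|_{\partial T}^2\Big)^{1/2}.$$
Here $2\mu$ and $\lambda$ are treated as fixed constants absorbed into $C$; a $\lambda$-robust version would use $\sqrt\lambda$ on each factor instead.

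\begin{itemize}
\item The second factor is at most $C\3bar\bv_h\3bar_{1,h}\le C\3bar\bv_h\3bar_{V_h}$ by \eqref{normeq}.
\item For the first factor, the trace inequality \eqref{tracein} gives $h_T\|\phi\|^2_{\partial T}\le C(\|\phi\|_T^2+h_T^2\|\nabla\phi\|_T^2)$ with $\phi=({\cal Q}_h-I)\epsilon\bu$ (and likewise for the divergence).
\item I then invoke \eqref{error3} with $s=0,1$ and $n=k$. This is allowed since $r=k-1+N\ge k$ or $r=k-1+2N\ge k$, so the index $n=k$ lies in the admissible range. It gives $Ch^{2k}\|\bu\|_{k+1}^2$, because $\epsilon\bu$ and $\nabla\cdot\bu$ are linear combinations of entries of $\nabla\bu$ and ${\cal Q}_h$ commutes with these combinations.
\end{itemize}

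\textbf{Estimate \eqref{es31}.} The bound for $\ell_3$ is identical, with $K\nabla p$ in place of $\epsilon\bu$. I use \eqref{normeq8} to bound $\big(\sum_T h_T^{-1}\|q_b-q_0\|_{\partial T}^2\big)^{1/2}$ by $C\3bar q_h\3bar_{W_h}$, and the analogue of \eqref{error3} for a scalar to obtain $h^k\|p\|_{k+1}$. If $K$ is variable, I assume it is smooth (piecewise constant is simplest) so that ${\cal Q}_h$ still approximates $K\nabla p$ to order $h^k$.

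\textbf{Estimate \eqref{es21}, the main obstacle.} Its claimed extra power $h^{k+1}$ comes from ${\cal Q}_h$ acting on $p$ itself rather than on a derivative. Proceeding as above,
$$|\ell_2|\le\Big(\sum_T h_T\|({\cal Q}_h-I)p\|^2_{\partial T}\Big)^{1/2}\Big(\sum_T h_T^{-1}\|\bv_b-\bv_0\|^2_{\partial T}\Big)^{1/2},$$
and \eqref{tracein} together with \eqref{error1} at $s=0,1$ gives $\sum_T h_T\|({\cal Q}_h-I)p\|_{\partial T}^2\le C\sum_T\big(\|({\cal Q}_h-I)p\|_T^2+h_T^2\|({\cal Q}_h-I)p\|_{1,T}^2\big)\le Ch^{2(n+1)}\|p\|^2_{n+1}$.

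The delicate point is that with only $p\in H^{k+1}$ one may take at most $n=k$ in \eqref{error1}, giving $h^{k+1}\|p\|_{k+1}$. This is exactly what is claimed, so no higher regularity is needed. I would double-check that \eqref{error1} indeed holds at $n=k$ even though ${\cal Q}_h$ projects onto the larger space $P_r$; it does, since the projection onto $P_r\supset P_k$ is at least as accurate as the projection onto $P_k$.

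Finally, the second factor is again bounded by $C\3bar\bv_h\3bar_{V_h}$ via \eqref{normeq}, which completes the plan.
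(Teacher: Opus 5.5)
Your proposal is correct and follows essentially the same route as the paper: elementwise Cauchy--Schwarz, the trace inequality \eqref{tracein}, the projection estimates of Lemma \ref{lem} with $n=k$, and the norm equivalences \eqref{normeq} and \eqref{normeq8} to absorb the jump terms. Two small remarks. For $\ell_3$ you use the gradient-type estimate (the scalar analogue of \eqref{error3}) where the paper cites \eqref{error1}; yours is, if anything, the more apt reference. Also, your ``at least as accurate'' justification for the $P_r$-projection is literally true only for $s=0$, but since Lemma \ref{lem} already covers $n=k$ you can simply cite it.
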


\begin{proof}
Recall that $\mathcal{Q}_h$ denotes the $L^2$ projection operator onto the finite element space of piecewise polynomials of degree at most $r$, where $r = 2N+k-1$ for non-convex elements and $r = N+k-1$ for convex elements in the finite element partition. 

Using the Cauchy--Schwarz inequality, the trace inequality \eqref{tracein}, the norm equivalence \eqref{normeq}, and setting $n=k$ in \eqref{error3}, we obtain:
 \begin{equation*} 
\begin{split}
|\ell_1(\bu, \bv_h)|\leq &(\sum_{T\in {\cal T}_h}  
  h_T^{-1}\|\bv_b-\bv_0\|_{\partial T} ^2)^{\frac{1}{2}} (\sum_{T\in {\cal T}_h}  
 h_T \|({\cal Q}_h-I) \epsilon \bu \cdot \bn\|_{\partial T} ^2)^{\frac{1}{2}} \\
 &+(\sum_{T\in {\cal T}_h}  
  h_T^{-1}\|\bv_b-\bv_0\|_{\partial T} ^2)^{\frac{1}{2}} (\sum_{T\in {\cal T}_h}  
 h_T \|({\cal Q}_h-I) \nabla  \cdot\bu \cdot \bn\|_{\partial T} ^2)^{\frac{1}{2}} \\
\leq & \|\bv_h\|_{1, h} (\sum_{T\in {\cal T}_h}  
 \|({\cal Q}_h-I) \epsilon \bu \cdot \bn\|_{T} ^2+ h_T^2 \|({\cal Q}_h-I) \epsilon \bu \cdot \bn\|_{1, T} ^2)^{\frac{1}{2}}\\ 
 &+\|\bv_h\|_{1, h} (\sum_{T\in {\cal T}_h}  
 \|({\cal Q}_h-I) \nabla  \cdot\bu \cdot \bn\|_{T} ^2+ h_T^2 \|({\cal Q}_h-I) \nabla  \cdot \bu \cdot \bn\|_{1, T} ^2)^{\frac{1}{2}}\\ 
 \leq & Ch^k \|\bu\|_{k+1}\3bar \bv_h \3bar_{V_h}.
\end{split}
\end{equation*}
This completes the proof of \eqref{es11}.

Using the Cauchy-Schwarz inequality, the trace inequality \eqref{tracein}, letting $n=k$ in  \eqref{error1}, the norm equivalence \eqref{normeq},   we have
 \begin{equation*} 
\begin{split}
|\ell_2(\bv_h, p)|\leq &(\sum_{T\in {\cal T}_h}  
  h_T^{-1}\|\bv_b-\bv_0\|_{\partial T} ^2)^{\frac{1}{2}} (\sum_{T\in {\cal T}_h}  
 h_T \|({\cal Q}_h-I)p\|_{\partial T} ^2)^{\frac{1}{2}} \\
\leq & \|\bv_h\|_{1, h} (\sum_{T\in {\cal T}_h}  
 \|({\cal Q}_h-I)p\|_{T} ^2+ h_T^2 \|({\cal Q}_h-I)p\|_{1, T} ^2)^{\frac{1}{2}}\\ 
 \leq & Ch^{k+1} \|p\|_{k+1}\3bar \bv_h \3bar_{V_h}.
\end{split}
\end{equation*}
This completes the proof of \eqref{es21}.

Using the Cauchy-Schwarz inequality, the trace inequality \eqref{tracein}, letting $n=k$ in  \eqref{error1}, the norm equivalence \eqref{normeq8},   we have
 \begin{equation*} 
\begin{split}
|\ell_3(p, q_h)|\leq &(\sum_{T\in {\cal T}_h}  
  h_T^{-1}\|q_b-q_0\|_{\partial T} ^2)^{\frac{1}{2}} (\sum_{T\in {\cal T}_h}  
 h_T \|({\cal Q}_h-I)K\nabla p \cdot \bn\|_{\partial T} ^2)^{\frac{1}{2}} \\
\leq & \|q_h\|_{1, h} (\sum_{T\in {\cal T}_h}  
 \|({\cal Q}_h-I)K\nabla p \cdot \bn\|_{T} ^2+ h_T^2 \|({\cal Q}_h-I)K\nabla p \cdot \bn\|_{1, T} ^2)^{\frac{1}{2}}\\ 
 \leq & Ch^{k} \|p\|_{k+1}\3bar q_h \3bar_{W_h}.
\end{split}
\end{equation*}
This completes the proof of \eqref{es31}.
\end{proof}

Based on \eqref{erroreqn2_1}-\eqref{erroreqn2_2} and following the standard error analysis for time dependent problems as presented by Thomee \cite{50}, we define the elliptic projections $\bar{\mathbf{u}}_h \in V_h^c$ and $\bar{p}_h \in W_h^t$   such that for any $\mathbf{v}_h \in V_h^c$ and $q_h \in W_h^t$:
\a{
    a(\bar{\bu}_h, \bv_h)-b(\bv_h, \bar{p}_h)& 
    =a(Q_h\bu, \bv_h)-b(\bv_h, Q_hp)-\ell_1 (\bu, \bv_h)-\ell_2(\bv_h, p),
 \\
    c(\bar{p}_h, q_h)& =c(Q_h p, q_h)-\ell_3(p, q_h).
    }

Then we can split the errors between the $L^2$ projections $Q_h\bu$, $Q_h p$ and $\bu_h^n$, $p_h^n$ as following
$$
Q_h\bu(t_n)-\bu_h^n=(Q_h\bu(t_n)-\bar{\bu}_h(t_n))-(\bu_h^n-\bar{\bu}_h(t_n))=e_{\bu, 1}^n-e_{\bu, 2}^n, 
$$
$$
Q_h p(t_n)-p_h^n=(Q_h p(t_n)-\bar{p}_h(t_n))-(p_h^n-\bar{p}_h(t_n))=e_{p, 1}^n-e_{p, 2}^n.
$$

Next, we provide the error estimates for $\mathbf{e}_{\mathbf{u}, 1}^n$ and $e_{p, 1}^n$.

\begin{lemma}\label{le1} 
Assume that the finite element partition $\mathcal{T}_h$ is shape-regular. If $\mathbf{u} \in [H^{k+1}(\Omega)]^d$ and $p \in H^k(\Omega)$, then we have the following estimates:
\begin{equation}\label{eee1}
   \3bar e_{p, 1}^n \3bar_{W_h} \leq Ch^k\|p\|_{k+1},
\end{equation}
\begin{equation}\label{eee2}
   \3bar e_{\bu, 1}^n \3bar_{V_h} \leq Ch^k(\|\bu\|_{k+1}+\|p\|_{k+1}).
\end{equation}
\end{lemma}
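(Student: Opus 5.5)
The elliptic projection system is block triangular: the second equation involves only $\bar p_h$, and the first then determines $\bar{\bu}_h$ once $\bar p_h$ is known. I will therefore estimate $e_{p,1}^n$ first and then $e_{\bu,1}^n$.

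\textbf{Pressure error.} Subtracting the definition $c(\bar p_h,q_h)=c(Q_hp,q_h)-\ell_3(p,q_h)$ gives the error equation
$$c(e_{p,1}^n,q_h)=\ell_3(p,q_h),\qquad \forall q_h\in W_h^t .$$
Here $e_{p,1}^n=Q_hp(t_n)-\bar p_h(t_n)$ lies in $W_h^t$ because $p=0$ on $\Gamma_t$ forces $Q_bp=0$ there. Take $q_h=e_{p,1}^n$. Since $c(q,q)=\3bar q\3bar_{W_h}^2$ and $\3bar\cdot\3bar_{W_h}$ is a norm on $W_h^t$, the bound \eqref{es31} yields
$$\3bar e_{p,1}^n\3bar_{W_h}^2\le Ch^k\|p\|_{k+1}\3bar e_{p,1}^n\3bar_{W_h}.$$
Dividing through gives \eqref{eee1}.

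\textbf{Displacement error.} The analogous subtraction in the first equation gives
$$a(e_{\bu,1}^n,\bv_h)=b(\bv_h,e_{p,1}^n)+\ell_1(\bu,\bv_h)+\ell_2(\bv_h,p),\qquad\forall \bv_h\in V_h^c .$$
The definition mixes $Q_hp$ and ${\cal Q}_hp$ in $b$; since $b(\bv_h,\cdot)$ only sees the interior component paired with $\nabla_w\cdot\bv_h$, I will interpret the projected pressure consistently with Lemma \ref{errorequa2}. Take $\bv_h=e_{\bu,1}^n\in V_h^c$ and use $a(\bv,\bv)=\3bar\bv\3bar_{V_h}^2$. The bounds \eqref{es11} and \eqref{es21} control the two consistency terms by $Ch^k(\|\bu\|_{k+1}+\|p\|_{k+1})\3bar e_{\bu,1}^n\3bar_{V_h}$. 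The coupling term is bounded via \eqref{bform}:
$$|b(e_{\bu,1}^n,e_{p,1}^n)|\le C\3bar e_{\bu,1}^n\3bar_{V_h}\|e_{p,1}^n\| .$$

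\textbf{Main obstacle.} It remains to bound $\|e_{p,1}^n\|$ in $L^2$ by $\3bar e_{p,1}^n\3bar_{W_h}$, i.e., a discrete Poincaré inequality on $W_h^t$. I would obtain this from the norm equivalence \eqref{normeq8} together with the standard discrete Poincaré inequality for the $\|\cdot\|_{1,h}$ seminorm with vanishing boundary values on $\Gamma_t$, which has positive measure. This gives $\|e_{p,1}^n\|\le C\3bar e_{p,1}^n\3bar_{W_h}\le Ch^k\|p\|_{k+1}$ by \eqref{eee1}. The constant will depend on $K$, which is assumed bounded below. Combining the three estimates and dividing by $\3bar e_{\bu,1}^n\3bar_{V_h}$ yields \eqref{eee2}.

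This Poincaré step is the delicate one. If it were unavailable, I would instead invoke the inf-sup condition \eqref{infsup} on the first error equation to control $\|e_{p,1}^n\|$. However, that route already requires a bound on $e_{\bu,1}^n$, so it would be circular, and the Poincaré route is the one to use.
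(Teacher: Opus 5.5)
Your proof is correct and follows the paper's argument: first $c(e_{p,1}^n,q_h)=\ell_3(p,q_h)$ combined with \eqref{es31}, then $a(e_{\bu,1}^n,\bv_h)=b(\bv_h,e_{p,1}^n)+\ell_1(\bu,\bv_h)+\ell_2(\bv_h,p)$ combined with \eqref{bform}, \eqref{es11} and \eqref{es21}. The only difference is that you state explicitly the discrete Poincar\'e inequality $\|e_{p,1}^n\|\le C\3bar e_{p,1}^n\3bar_{W_h}$ on $W_h^t$; the paper needs this step but uses it without comment when it passes from its bound involving $\|e_{p,1}^n\|$ to \eqref{eee2}.
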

\begin{proof}
    For any $q_h\in W_h^t$, using \eqref{es31}, we have 
    $$
    c( e_{p, 1}^n, q_h)=c(Q_hp, q_h)-c(\bar{p}_h, q_h)=\ell_3(p, q_h)\leq Ch^k\|p\|_{k+1}\3bar q_h\3bar_{W_h}.
    $$
  This gives \eqref{eee1}.

    For any $\bv_h\in V_h^c$, using \eqref{bform}, \eqref{es11} and \eqref{es21} ,  we have
\begin{equation*}
    \begin{split}
        a(e_{\bu, 1}^n, \bv_h)=&a(Q_h\bu, \bv_h)-a(\bar{\bu}_h, \bv_h)\\
        =&b(\bv_h, e_{p, 1}^n)+\ell_1(\bu, \bv_h)+l_2(\bv_h, p)\\
        \leq & C\|e_{p, 1}^n\| \3bar\bv_h\3bar_{V_h}+Ch^k(\|\bu\|_{k+1}+\|p\|_{k+1})\3bar \bv_h\3bar_{V_h}.
    \end{split}
\end{equation*}
 This gives \eqref{eee2}.
\end{proof}

\begin{lemma} \label{le2} 
Assume that the finite element partition $\mathcal{T}_h$ is shape-regular. Then for $\mathbf{u} \in [H^{k+1}(\Omega)]^d$ and $p \in H^{k+1}(\Omega)$, the following estimates hold:
\begin{equation}\label{eee3}
   \3bar \partial_t e_{p, 1}^n \3bar_{W_h} \leq Ch^k\|\partial_t p\|_{k+1},
\end{equation}
\begin{equation}\label{eee4}
   \3bar \partial_t e_{\bu, 1}^n \3bar_{V_h} \leq Ch^k(\|\partial_t \bu\|_{k+1}+\|\partial_t p\|_{k+1}).
\end{equation}
\end{lemma}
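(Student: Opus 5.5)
The approach is to differentiate the definition of the elliptic projection in time and then repeat the argument of Lemma \ref{le1} with $(\bu,p)$ replaced by $(\partial_t\bu,\partial_t p)$.

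The bilinear forms $a(\cdot,\cdot)$, $b(\cdot,\cdot)$ and $c(\cdot,\cdot)$ have time-independent coefficients $\mu$, $\lambda$ and $K$. The projections $Q_h$ and ${\cal Q}_h$ are spatial $L^2$ projections and commute with $\partial_t$. The consistency functionals $\ell_1,\ell_2,\ell_3$ are linear in their exact-solution argument with time-independent coefficients. Hence differentiating the defining relations of $(\bar\bu_h,\bar p_h)$ with respect to $t$ gives, for all $\bv_h\in V_h^c$ and $q_h\in W_h^t$,
\begin{align*}
a(\partial_t\bar\bu_h,\bv_h)-b(\bv_h,\partial_t\bar p_h)&=a(Q_h\partial_t\bu,\bv_h)-b(\bv_h,Q_h\partial_t p)-\ell_1(\partial_t\bu,\bv_h)-\ell_2(\bv_h,\partial_t p),\\
c(\partial_t\bar p_h,q_h)&=c(Q_h\partial_t p,q_h)-\ell_3(\partial_t p,q_h).
\end{align*}
Differentiation under the forms is legitimate because the discrete problem is finite dimensional. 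Its data are linear in $(\bu,p)$, so $\bar\bu_h$ and $\bar p_h$ inherit the time regularity of $\bu$ and $p$.

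From the second relation, $c(\partial_t e_{p,1}^n,q_h)=\ell_3(\partial_t p,q_h)$. Choosing $q_h=\partial_t e_{p,1}^n\in W_h^t$ and applying \eqref{es31} with $p$ replaced by $\partial_t p$ gives $\3bar\partial_t e_{p,1}^n\3bar_{W_h}^2\le Ch^k\|\partial_t p\|_{k+1}\3bar\partial_t e_{p,1}^n\3bar_{W_h}$. This proves \eqref{eee3}, using that $\3bar\cdot\3bar_{W_h}$ is a norm on $W_h^t$.

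From the first relation,
\[
a(\partial_t e_{\bu,1}^n,\bv_h)=b(\bv_h,\partial_t e_{p,1}^n)+\ell_1(\partial_t\bu,\bv_h)+\ell_2(\bv_h,\partial_t p).
\]
Taking $\bv_h=\partial_t e_{\bu,1}^n$ and using \eqref{bform}, \eqref{es11} and \eqref{es21} yields
\[
\3bar\partial_t e_{\bu,1}^n\3bar_{V_h}\le C\|\partial_t e_{p,1}^n\|+Ch^k(\|\partial_t\bu\|_{k+1}+\|\partial_t p\|_{k+1}).
\]

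The main obstacle is controlling the $L^2$ norm $\|\partial_t e_{p,1}^n\|$ by $\3bar\partial_t e_{p,1}^n\3bar_{W_h}$. This is the same gap that is passed over in Lemma \ref{le1}. I would close it with a discrete Poincaré inequality on $W_h^t$: since $q_b=0$ on $\Gamma_t$, which has positive measure, one has $\|q_0\|\le C\3bar q\3bar_{1,h}$. This is the standard broken-$H^1$ Poincaré–Friedrichs estimate with the face jumps supplied by $h_T^{-1}\|q_0-q_b\|^2_{\partial T}$. Combining it with the norm equivalence \eqref{normeq8} gives $\|\partial_t e_{p,1}^n\|\le C\3bar\partial_t e_{p,1}^n\3bar_{W_h}\le Ch^k\|\partial_t p\|_{k+1}$, and \eqref{eee4} follows.

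If one prefers to avoid the Poincaré step, the alternative is to use the inf-sup condition \eqref{infsup} on the first relation to bound $\|\partial_t e_{p,1}^n\|$ in terms of $a(\partial_t e_{\bu,1}^n,\cdot)$ and the $\ell$-terms. However, that couples the two estimates, so the Poincaré route is the one I would pursue first.
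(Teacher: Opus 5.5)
Your proposal is correct and follows the same route as the paper, whose proof simply says the lemma is proved like Lemma~\ref{le1}: apply the elliptic-projection relations to $(\partial_t\bu,\partial_t p)$ and repeat the coercivity argument with \eqref{es11}--\eqref{es31}. Your explicit discrete Poincar\'e--Friedrichs bound $\|q_0\|\le C\3bar q\3bar_{W_h}$ on $W_h^t$ supplies a step the paper uses without justification in Lemma~\ref{le1}, when it passes from $C\|e_{p,1}^n\|$ to \eqref{eee2}; that bound relies on $q_b$ being single-valued and vanishing on $\Gamma_t$.
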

\begin{proof}
    This Lemma can be proved similar to Lemma \ref{le1}. 
\end{proof}
Now we estimate the error $\mathbf{e}_{\mathbf{u}, 2}^n$ and $e_{p, 2}^n$. The overall error estimates can be derived by the triangle inequality. In order to simplify the notation, we introduce the following norm on the weak finite element spaces $V_h$ and $W_h$:
$$
\3bar (\bu, p)\3bar^2=\3bar \bu\3bar_{V_h}^2+\Delta t \3bar p\3bar_{W_h}^2.
$$

\begin{lemma}
    Let ${\cal R}_{\bu}^n =\partial _{t} Q_h \bu(t_n)-\frac{\bar{\bu}_h(t_n)-\bar{\bu}_h(t_{n-1})}{\Delta t}$, 
    we have
    \begin{equation} \label{errorestimate}
        \3bar (e_{\bu, 2}^n, e_{p, 2}^n)\3bar\leq C(\3bar e_{p, 2}^0\3bar_{V_h}+C\Delta t \sum_{j=1}^n \3bar {\cal R}_{\bu}^j\3bar_{V_h}).
    \end{equation}
\end{lemma}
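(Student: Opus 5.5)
Subtracting the elliptic-projection identities from the scheme will give an error system for $(e_{\bu,2}^n,e_{p,2}^n)$, which I will test with the discrete time derivative of the displacement error and the pressure error; this produces a telescoping energy identity whose only source term involves ${\cal R}_{\bu}^n$. Summing over time steps and using a discrete Gronwall-type step then gives \eqref{errorestimate}. (The term $\3bar e_{p,2}^0\3bar_{V_h}$ on the right-hand side must be read as the initial displacement error $\3bar e_{\bu,2}^0\3bar_{V_h}$; that is what the argument produces.)

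\emph{Step 1: error equations.} By the definition of $\bar{\bu}_h,\bar{p}_h$ and \eqref{erroreqn2_1}, at $t_n$ we have
\[
a(\bar{\bu}_h(t_n),\bv_h)-b(\bv_h,\bar p_h(t_n))=(\bf(t_n),\bv_0)+\langle\bbeta(t_n),\bv_b\rangle_{\Gamma_t}.
\]
I take the pressure in $b$ in \eqref{erroreqn2_1} to be $Q_hp$, matching the elliptic-projection definition. Subtracting this from the first equation of \eqref{WG2} gives
\[
a(e_{\bu,2}^n,\bv_h)-b(\bv_h,e_{p,2}^n)=0 .
\]
For the second equation, \eqref{erroreqn2_2} together with $c(\bar p_h,q_h)=c(Q_hp,q_h)-\ell_3$ yields $-b(\partial_tQ_h\bu(t_n),q_h)-c(\bar p_h(t_n),q_h)=(g(t_n),q_0)$. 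Subtracting the second equation of \eqref{WG2} and inserting $\bar\partial_t\bar{\bu}_h$ gives
\[
-b(\bar\partial_t e_{\bu,2}^n,q_h)-c(e_{p,2}^n,q_h)=b({\cal R}_{\bu}^n,q_h).
\]

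\emph{Step 2: energy estimate.} Take $\bv_h=\bar\partial_t e_{\bu,2}^n$ in the first error equation and $q_h=-e_{p,2}^n$ in the second, then add them. The $b$-terms cancel, leaving
\[
a(e_{\bu,2}^n,\bar\partial_te_{\bu,2}^n)+\3bar e_{p,2}^n\3bar_{W_h}^2=-b({\cal R}_{\bu}^n,e_{p,2}^n).
\]
The identity $2a(x,x-y)=\3bar x\3bar^2-\3bar y\3bar^2+\3bar x-y\3bar^2$ bounds the first term from below by $\frac{1}{2\Delta t}\big(\3bar e_{\bu,2}^n\3bar_{V_h}^2-\3bar e_{\bu,2}^{n-1}\3bar_{V_h}^2\big)$.

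\emph{Step 3: the source term (main obstacle).} The right side $b({\cal R}_{\bu}^n,e_{p,2}^n)$ must be controlled by $\3bar{\cal R}_{\bu}^n\3bar_{V_h}$ times a quantity already available on the left. By \eqref{bform} it is bounded by $C\3bar{\cal R}_{\bu}^n\3bar_{V_h}\|e_{p,2}^n\|$. To control $\|e_{p,2}^n\|$, apply the discrete inf-sup condition \eqref{infsup} to the first error equation:
\[
\alpha\|e_{p,2}^n\|\le\sup_{\bv_h}\frac{b(\bv_h,e_{p,2}^n)}{\3bar\bv_h\3bar_{V_h}}=\sup_{\bv_h}\frac{a(e_{\bu,2}^n,\bv_h)}{\3bar\bv_h\3bar_{V_h}}\le\3bar e_{\bu,2}^n\3bar_{V_h}.
\]
This step is where \eqref{infsup} (Lemma \ref{infsuplemma}) is essential. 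The right side is then at most $C\3bar{\cal R}_{\bu}^n\3bar_{V_h}\3bar e_{\bu,2}^n\3bar_{V_h}$.

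\emph{Step 4: summation.} Multiply by $2\Delta t$ and sum from $1$ to $n$:
\[
\3bar e_{\bu,2}^n\3bar_{V_h}^2+2\Delta t\sum_{j=1}^n\3bar e_{p,2}^j\3bar_{W_h}^2\le\3bar e_{\bu,2}^0\3bar_{V_h}^2+C\Delta t\sum_{j=1}^n\3bar{\cal R}_{\bu}^j\3bar_{V_h}\3bar e_{\bu,2}^j\3bar_{V_h}.
\]
Let $E=\max_{j\le n}\3bar e_{\bu,2}^j\3bar_{V_h}$. Taking the maximum over the index on the left gives $E^2\le\3bar e_{\bu,2}^0\3bar_{V_h}^2+CE\,\Delta t\sum_j\3bar{\cal R}_{\bu}^j\3bar_{V_h}$, hence
\[
E\le C\Big(\3bar e_{\bu,2}^0\3bar_{V_h}+\Delta t\sum_{j=1}^n\3bar{\cal R}_{\bu}^j\3bar_{V_h}\Big).
\]
Substituting back bounds $2\Delta t\sum_j\3bar e_{p,2}^j\3bar_{W_h}^2$ by the square of the same quantity. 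In particular $\Delta t\3bar e_{p,2}^n\3bar_{W_h}^2$ is controlled by it, which gives \eqref{errorestimate} in the $\3bar(\cdot,\cdot)\3bar$ norm.
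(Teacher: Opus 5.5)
Your proposal is correct and follows the paper's proof. It uses the same error equations, the same test functions $\bv_h=\bar\partial_t e_{\bu,2}^n$ and $q_h=\pm e_{p,2}^n$, and the same combination of \eqref{bform} with the inf-sup condition to get $\|e_{p,2}^n\|\le C\3bar e_{\bu,2}^n\3bar_{V_h}$. The one difference is the closing step: you use the telescoping identity, summation and a max argument, while the paper bounds $a(e_{\bu,2}^n,e_{\bu,2}^{n-1})$ by Cauchy--Schwarz and divides by $\3bar e_{\bu,2}^n\3bar_{V_h}$ to iterate $\3bar e_{\bu,2}^n\3bar_{V_h}\le \3bar e_{\bu,2}^{n-1}\3bar_{V_h}+C\Delta t\3bar{\cal R}_{\bu}^n\3bar_{V_h}$.

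Two minor remarks. The source term in your second error equation should be $-b({\cal R}_{\bu}^n,q_h)$; this sign slip is harmless because only its absolute value enters. Your reading of $e_{p,2}^0$ as $e_{\bu,2}^0$ matches what the paper's own argument \eqref{aa2} produces.
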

\begin{proof}
    Using the definitions of $\bar{\bu}_h$ and $\bar{p}_h$ gives
    \begin{equation}\label{s1}
        a(e_{\bu, 2}^n, \bv_h)-b(\bv_h, e_{p, 2}^n)=0,
    \end{equation}
\begin{equation}\label{s2}
    -b(\bar{\partial} e_{\bu, 2}^n, q_h)-c(e_{p, 2}^n, q_h)=-b({\cal R}_{\bu}^n, q_h).
\end{equation}
Letting $\bv_h= \bar{\partial} e_{\bu, 2}^n$ in \eqref{s1} and $q_h= e_{p, 2}^n$ in \eqref{s2} and using \eqref{aform} and \eqref{bform}, we have
\begin{equation*}
    \begin{split}
        \3bar e_{\bu, 2}^n\3bar_{V_h}^2+\Delta t \3bar e_{p, 2}^n\3bar_{W_h}^2 &=a(e_{\bu, 2}^n, e_{\bu, 2}^{n-1})+\Delta t b({\cal R}_{\bu}^n, e_{p, 2}^{n})\\
        & \leq \3bar e_{\bu, 2}^n\3bar_{V_h}\3bar e_{\bu, 2}^{n-1}\3bar_{V_h}+C\Delta t \3bar {\cal R}_{\bu}^n\3bar_{V_h} \|e_{p, 2}^n\|. 
    \end{split}
\end{equation*}

Using the inf-sup condition \eqref{infsup} results in 
\begin{equation*}
    \|e_{p, 2}^n\|\leq Csup_{\bv_h\in V_h^c} \frac{b(\bv_h,  e_{p, 2}^n)}{\3bar \bv_h\3bar_{V_h}}=Csup_{\bv_h\in V_h^c} \frac{a( e_{\bu, 2}^n, \bv_h)}{\3bar \bv_h\3bar_{V_h}}=C\3bar e_{\bu, 2}^n \3bar_{V_h}.
\end{equation*}
Thus, 
\begin{equation}\label{aa1}
     \3bar e_{\bu, 2}^n\3bar_{V_h}^2+\Delta t \3bar e_{p, 2}^n\3bar_{W_h}^2\leq \3bar e_{\bu, 2}^n\3bar_{V_h}\3bar e_{\bu, 2}^{n-1}\3bar_{V_h}+C\Delta t \3bar {\cal R}_{\bu}^n\3bar_{V_h} \3bar e_{\bu, 2}^n \3bar_{V_h}.
\end{equation}
This implies
\begin{equation} \label{aa2}
 \3bar e_{\bu, 2}^n\3bar_{V_h} \leq  \3bar e_{\bu, 2}^0\3bar_{V_h}+C\Delta t \sum_{j=1}^n \3bar   {\cal R}_{\bu}^j\3bar_{V_h}.
\end{equation}
Combining \eqref{aa1} and \eqref{aa2} gives the error estimate \eqref{errorestimate}.

\end{proof}

Applying the same procedure of Lemma 8 in  \cite{27} leads to  
\begin{equation}\label{eee5}
    \begin{split}
        \sum_{j=1}^n \3bar {\cal R}_{\bu}^j\3bar_{V_h} \leq &C(\int_0^{t_n} \3bar \partial_{tt} Q_h\bu\3bar_{V_h}dt+\frac{1}{\Delta t} \int_0^{t_n} \3bar \partial_t  e_{\bu, 1}\3bar_{V_h}dt)\\
         \leq &C(\int_0^{t_n} \| \partial_{tt}  \bu\|_{1}dt+\frac{1}{\Delta t} \int_0^{t_n} \3bar \partial_t  e_{\bu, 1}\3bar_{V_h}dt).
    \end{split}
\end{equation}
Combining all the results above, we obtain the following theorem regarding the error estimates:

\begin{theorem}
  Assume that the finite element partition ${\cal T}_h$ is shape regular and $\bu(t)\in L^{\infty}((0, T], [H^{k+1}(\Omega)]^d)$, 
  $\partial_t \bu(t)\in L^{1}((0, T], [H^{k+1}(\Omega)]^d)$,  
  $\partial_{tt} \bu(t)\in L^{1}((0, T],$ $ [H^{k+1}(\Omega)]^d)$, 
   $p(t)\in L^{\infty}((0, T], H^{k+1}(\Omega))$,   $\partial_t p(t)\in L^{1}((0, T], H^{k+1}(\Omega))$, 
    the following error estimate holds  \begin{equation}\label{es1}
       \begin{split}
           &\3bar (Q_h \bu(t_n)-\bu_h^n, Q_h p(t_n)-p_h^n)\3bar\\
           \leq & C(\3bar  e_{\bu, 2}^0\3bar_{V_h}+\Delta t \int_0^{t_n} \| \partial_{tt}  \bu\|_{1}dt \\&+h(\|\bu\|_2+\|p\|_2+\int_0^{t_n} (\|\partial_t\bu\|_2+\|\partial_t p\|_2)dt)).
       \end{split}
   \end{equation}
As a result, the following error estimate holds
 \begin{equation}\label{es2}
       \begin{split}
           &\3bar (\bu(t_n)-\bu_h^n,  p(t_n)-p_h^n)\3bar\\
           \leq & C(\3bar  e_{\bu, 2}^0\3bar_{V_h}+\Delta t \int_0^{t_n} \| \partial_{tt}  \bu\|_{1}dt   \\&+h(\|\bu\|_2+\|p\|_2+\int_0^{t_n} (\|\partial_t\bu\|_2+\|\partial_t p\|_2)dt)).
       \end{split}
   \end{equation}
\end{theorem}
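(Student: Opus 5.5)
The plan is to assemble the pieces already established, using the splitting
\[
Q_h\bu(t_n)-\bu_h^n=e_{\bu,1}^n-e_{\bu,2}^n,\qquad Q_hp(t_n)-p_h^n=e_{p,1}^n-e_{p,2}^n .
\]
The triangle inequality in the norm $\3bar(\cdot,\cdot)\3bar$ then gives
\[
\3bar (Q_h\bu(t_n)-\bu_h^n,Q_hp(t_n)-p_h^n)\3bar\le \3bar(e_{\bu,1}^n,e_{p,1}^n)\3bar+\3bar(e_{\bu,2}^n,e_{p,2}^n)\3bar .
\]

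\textbf{Elliptic-projection part.} The first term is bounded directly by Lemma \ref{le1}. Since $\3bar(e_{\bu,1}^n,e_{p,1}^n)\3bar^2=\3bar e_{\bu,1}^n\3bar_{V_h}^2+\Delta t\3bar e_{p,1}^n\3bar_{W_h}^2$, it is controlled by $Ch^k(\|\bu\|_{k+1}+\|p\|_{k+1})$. The theorem is stated with $h$ and $H^2$ norms, which is the case $k=1$; I would carry the argument at general $k$ and specialize, or simply read the statement with $h^k$ and $\|\cdot\|_{k+1}$.

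\textbf{Discrete-evolution part.} For the second term I invoke \eqref{errorestimate}, which reduces it to $\3bar e_{\bu,2}^0\3bar_{V_h}+C\Delta t\sum_{j\le n}\3bar{\cal R}_\bu^j\3bar_{V_h}$. I then apply \eqref{eee5}:
\begin{itemize}
\item The $\Delta t\int_0^{t_n}\|\partial_{tt}\bu\|_1\,dt$ term appears as stated.
\item The factor $\frac1{\Delta t}$ in front of $\int\3bar\partial_te_{\bu,1}\3bar_{V_h}$ is cancelled by the prefactor $\Delta t$.
\item The remaining integral is bounded by Lemma \ref{le2}, \eqref{eee4}, giving $Ch^k\int_0^{t_n}(\|\partial_t\bu\|_{k+1}+\|\partial_tp\|_{k+1})\,dt$.
\end{itemize}
Collecting terms yields \eqref{es1}.

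\textbf{From \eqref{es1} to \eqref{es2}.} I write $\bu(t_n)-\bu_h^n=(\bu-Q_h\bu)+(Q_h\bu-\bu_h^n)$, and similarly for $p$. The interpolation pieces are bounded by \eqref{erroresti1} and \eqref{erroresti12}: $\3bar\bu-Q_h\bu\3bar_{V_h}\le Ch^k\|\bu\|_{k+1}$ and $\sqrt{\Delta t}\,\3bar p-Q_hp\3bar_{W_h}\le Ch^k\|p\|_{k+1}$, where $\Delta t\le T$ is absorbed into $C$. These are of the same form as terms already present, so \eqref{es2} follows.

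\textbf{Main obstacle.} The weak point is justifying \eqref{eee5}, i.e.\ Lemma 8 of \cite{27}. I would first write ${\cal R}_\bu^j$ as the sum of two pieces:
\begin{itemize}
\item $\partial_tQ_h\bu(t_j)-\frac{Q_h\bu(t_j)-Q_h\bu(t_{j-1})}{\Delta t}$, the Taylor remainder $\frac1{\Delta t}\int_{t_{j-1}}^{t_j}(s-t_{j-1})\partial_{tt}Q_h\bu\,ds$;
\item $\frac{e_{\bu,1}(t_j)-e_{\bu,1}(t_{j-1})}{\Delta t}=\frac1{\Delta t}\int_{t_{j-1}}^{t_j}\partial_te_{\bu,1}\,ds$.
\end{itemize}
Summing over $j$ gives the two integrals; the second carries a $\frac1{\Delta t}$ that the $\Delta t$ prefactor removes. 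To convert $\3bar\partial_{tt}Q_h\bu\3bar_{V_h}$ to $\|\partial_{tt}\bu\|_1$, I use the commuting properties \eqref{pronew2}--\eqref{pronew3} and the $L^2$-stability of ${\cal Q}_h$.

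A subtlety is that \eqref{errorestimate} controls the $p$ part only through $\Delta t\3bar e_{p,2}^n\3bar_{W_h}^2$. If a bound on $\|e_{p,2}^n\|$ is also desired, it follows from the inf-sup argument in that lemma, $\|e_{p,2}^n\|\le C\3bar e_{\bu,2}^n\3bar_{V_h}$.
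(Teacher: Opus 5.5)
Your proposal is correct and follows the same route as the paper. The paper's proof also applies the triangle inequality to the splitting $e_{\bu,1}^n-e_{\bu,2}^n$, $e_{p,1}^n-e_{p,2}^n$, then combines \eqref{eee1}--\eqref{eee2}, \eqref{errorestimate}, \eqref{eee5} and \eqref{eee4}, and passes from \eqref{es1} to \eqref{es2} with \eqref{erroresti1}--\eqref{erroresti12}. Your Taylor-remainder/difference-quotient decomposition of $\mathcal{R}_{\bu}^j$ spells out what the paper defers to Lemma~8 of \cite{27}, and replacing $e_{p,2}^0$ by $e_{\bu,2}^0$ in \eqref{errorestimate} fixes a typo in the paper.
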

\begin{proof}
   From \eqref{eee1}, \eqref{eee2}, \eqref{errorestimate}, \eqref{eee5}, and triangular inequality, it is easy to derive  \eqref{es1}.      Using  
triangular inequality,  \eqref{es1}, and the error estimates of $L^2$ projections gives \eqref{es2}.

\end{proof}

\section{Numerical tests}

We solve first the Biot's model \eqref{model1}--\eqref{model2} on the unit square domain 
  $\Omega=(0,1)\times(0,1)$, where 
\an{\label{nu}  \mu&=\frac E{2(1+\nu)}, \quad \lambda=\frac{\nu E}{(1+\nu)(1-2\nu)}, \quad
   E=1, \quad  \nu=\nu_0, \quad K=1, \\ 
   \nonumber  f_1&= \pi^2[(3\mu+\lambda)\sin(\pi x)\sin(\pi y)
          -(\lambda+\mu) \cos(\pi x)\cos(\pi y)], \\
  \nonumber  \b f&= \t{e}^{-t}\p{f_1\\f_1}-\t{e}^{-t}\p{0\\ \pi \sin (\pi y)}, \quad \     \t{and}  \\
  \nonumber      g&= \t{e}^{-t}\pi [\cos(\pi x)\sin(\pi y)
          + \sin(\pi x)\cos(\pi y) -\pi K\cos(\pi y)]. }
The boundary conditions are 
\a{ \b u&=\b 0 \ \t{on} \ \partial\Omega; \\
       p&= 0 \ \t{on } \ \Gamma_t=\{ (x,1) : 0\le x \le 1\}; 
      \quad \ \partial_{\b n} p = 0 \ \t{on } \ \partial\Omega \setminus \Gamma_t.  }
The initial conditions are taken from the exact solution:
\an{\label{s-1} \ad{ \b u&= \t{e}^{-t}\sin(\pi x)\sin(\pi y) \p{1\\1},   \\
       p&= \t{e}^{-t}  [\cos(\pi  y)+1].  } }

The solution in \eqref{s-1} is approximated by the weak Galerkin finite element
   $P_k$-$P_k$/$P_{k+1}^2$ (for $\{u_0, u_b\}$/$\nabla_w$), $k= 1,2,3$, on nonconvex  
    grids shown in Figure \ref{f21}.
The errors and the computed orders of convergence are listed in Tables \ref{t1}--\ref{t3}.
The optimal order of convergence is achieved in every case.

\begin{figure}[H]
 \begin{center}\setlength\unitlength{1.0pt}
\begin{picture}(360,120)(0,0)
  \put(15,108){$G_1$:} \put(125,108){$G_2$:} \put(235,108){$G_3$:} 
  \put(0,-420){\includegraphics[width=380pt]{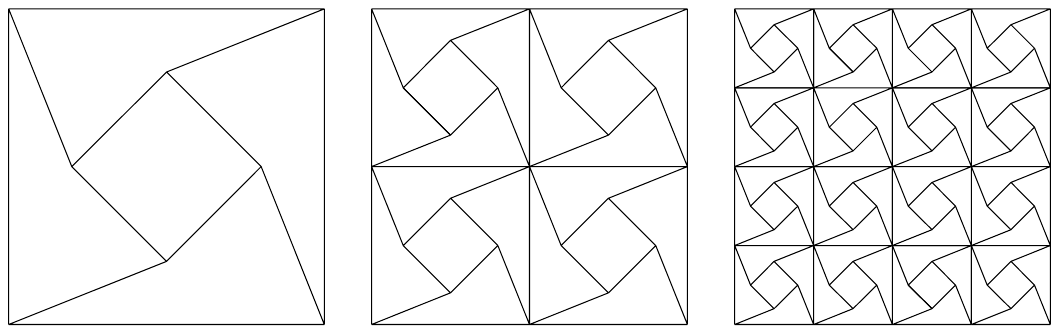}}  
 \end{picture}\end{center}
\caption{The nonconvex polygonal grids used in Tables \ref{t1}--\ref{t3}. }\label{f21}
\end{figure}

  \begin{table}[H]
  \caption{By the $P_1$ element for \eqref{s-1}  on Figure \ref{f21} grids.} \label{t1}
\begin{center}  
   \begin{tabular}{c|rr|rr|rr}  
 \hline 
$G_i$ &\multicolumn{2}{c|}{$ \|Q_h \b u - \b  u_h \| $   $O(h^r)$ }&
 \multicolumn{2}{c|}{$\| \nabla_w( Q_h \b u- \b u_h )\| $ $O(h^r)$ }
   & \multicolumn{2}{c}{$\|\nabla_w(Q_h p-p_h)\|$ $O(h^r)$} \\ \hline 
&\multicolumn{6}{c}{ By the $P_1$-$P_1$/$P_{2}^2$ weak Galerkin finite element, $\nu=0.25$ in \eqref{nu}.}
\\  \hline 
 3&    0.896E-02 &  2.0&    0.436E+00 &  1.3&    0.255E+00 &  1.1 \\
 4&    0.223E-02 &  2.0&    0.204E+00 &  1.1&    0.126E+00 &  1.0 \\
 5&    0.558E-03 &  2.0&    0.100E+00 &  1.0&    0.626E-01 &  1.0 \\
 \hline  
&\multicolumn{6}{c}{ By the $P_1$-$P_1$/$P_{2}^2$ weak Galerkin finite element, $\nu=0.499$ in \eqref{nu}.}
\\  \hline 
 3&    0.181E-01 &  4.5&    0.130E+01 &  2.6&    0.659E+00 &  1.1 \\
 4&    0.260E-02 &  2.8&    0.528E+00 &  1.3&    0.325E+00 &  1.0 \\
 5&    0.618E-03 &  2.1&    0.256E+00 &  1.0&    0.162E+00 &  1.0 \\
  \hline  
\end{tabular} \end{center}  \end{table}

  \begin{table}[H]
  \caption{By the $P_2$ element for \eqref{s-1}  on Figure \ref{f21} grids.} \label{t2}
\begin{center}  
   \begin{tabular}{c|rr|rr|rr}  
 \hline 
$G_i$ &\multicolumn{2}{c|}{$ \|Q_h \b u - \b  u_h \| $   $O(h^r)$ }&
 \multicolumn{2}{c|}{$\| \nabla_w( Q_h \b u- \b u_h )\| $ $O(h^r)$ }
   & \multicolumn{2}{c}{$\|\nabla_w(Q_h p-p_h)\|$ $O(h^r)$} \\ \hline 
&\multicolumn{6}{c}{ By the $P_2$-$P_2$/$P_{3}^2$ weak Galerkin finite element, $\nu=0.25$ in \eqref{nu}.}
\\  \hline 
 3&    0.128E-02 &  3.5&    0.142E+00 &  2.7&    0.530E-01 &  2.3 \\
 4&    0.134E-03 &  3.3&    0.269E-01 &  2.4&    0.124E-01 &  2.1 \\
 5&    0.158E-04 &  3.1&    0.605E-02 &  2.2&    0.305E-02 &  2.0 \\
 \hline  
&\multicolumn{6}{c}{ By the $P_2$-$P_2$/$P_{3}^2$ weak Galerkin finite element, $\nu=0.499$ in \eqref{nu}.}
\\  \hline 
 3&    0.158E-02 &  6.4&    0.158E+00 &  4.0&    0.530E-01 &  2.3 \\
 4&    0.152E-03 &  3.4&    0.272E-01 &  2.5&    0.124E-01 &  2.1 \\
 5&    0.187E-04 &  3.0&    0.608E-02 &  2.2&    0.305E-02 &  2.0 \\
  \hline  
\end{tabular} \end{center}  \end{table}
 
  \begin{table}[H]
  \caption{By the $P_2$ element for \eqref{s-1}  on Figure \ref{f21} grids.} \label{t3}
\begin{center}  
   \begin{tabular}{c|rr|rr|rr}  
 \hline 
$G_i$ &\multicolumn{2}{c|}{$ \|Q_h \b u - \b  u_h \| $   $O(h^r)$ }&
 \multicolumn{2}{c|}{$\| \nabla_w( Q_h \b u- \b u_h )\| $ $O(h^r)$ }
   & \multicolumn{2}{c}{$\|\nabla_w(Q_h p-p_h)\|$ $O(h^r)$} \\ \hline 
&\multicolumn{6}{c}{ By the $P_3$-$P_3$/$P_{4}^2$ weak Galerkin finite element, $\nu=0.25$ in \eqref{nu}.}
\\  \hline 
 2&    0.353E-02 &  5.2&    0.299E+00 &  4.4&    0.437E-01 &  4.1 \\
 3&    0.121E-03 &  4.9&    0.197E-01 &  3.9&    0.357E-02 &  3.6 \\
 4&    0.450E-05 &  4.7&    0.138E-02 &  3.8&    0.533E-03 &  2.7 \\
 \hline  
&\multicolumn{6}{c}{ By the $P_3$-$P_3$/$P_{4}^2$ weak Galerkin finite element, $\nu=0.499$ in \eqref{nu}.}
\\  \hline 
 2&    0.459E-01 &  5.9&    0.711E+00 &  5.7&    0.436E-01 &  4.1 \\
 3&    0.416E-03 &  6.8&    0.222E-01 &  5.0&    0.358E-02 &  3.6 \\
 4&    0.507E-05 &  6.4&    0.137E-02 &  4.0&    0.533E-03 &  2.7 \\
  \hline  
\end{tabular} \end{center}  \end{table}

We recompute the solution in \eqref{s-1} by the weak Galerkin finite element
   $P_k$-$P_k$/$P_{k+2}^2$ (for $\{u_0, u_b\}$/$\nabla_w$), $k= 1,2,3$, on nonconvex  
    grids shown in Figure \ref{f22}.
The errors and the computed orders of convergence are listed in Tables \ref{t4}--\ref{t6}.
The optimal order of convergence is achieved in every case, independent of $\nu$.

\begin{figure}[H]
 \begin{center}\setlength\unitlength{1.0pt}
\begin{picture}(360,120)(0,0)
  \put(15,108){$G_1$:} \put(125,108){$G_2$:} \put(235,108){$G_3$:} 
  \put(0,-420){\includegraphics[width=380pt]{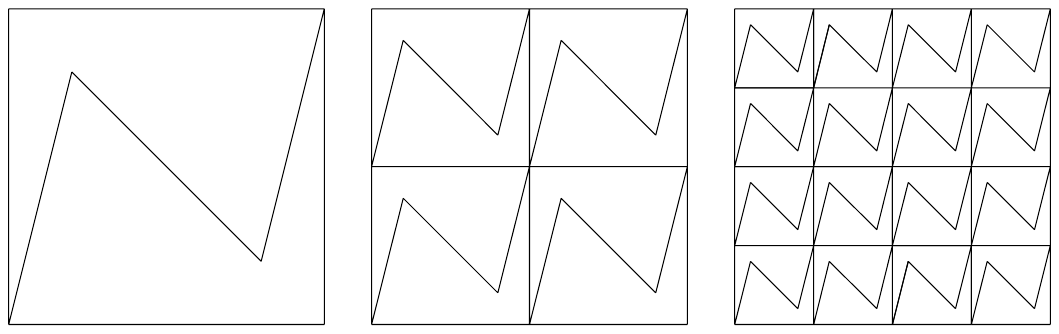}}  
 \end{picture}\end{center}
\caption{The nonconvex polygonal grids used in Tables \ref{t4}--\ref{t6}. }\label{f22}
\end{figure}

  \begin{table}[H]
  \caption{By the $P_1$ element for \eqref{s-1}  on Figure \ref{f22} grids.} \label{t4}
\begin{center}  
   \begin{tabular}{c|rr|rr|rr}  
 \hline 
$G_i$ &\multicolumn{2}{c|}{$ \|Q_h \b u - \b  u_h \| $   $O(h^r)$ }&
 \multicolumn{2}{c|}{$\| \nabla_w( Q_h \b u- \b u_h )\| $ $O(h^r)$ }
   & \multicolumn{2}{c}{$\|\nabla_w(Q_h p-p_h)\|$ $O(h^r)$} \\ \hline 
&\multicolumn{6}{c}{ By the $P_1$-$P_1$/$P_{3}^2$ weak Galerkin finite element, $\nu=0.25$ in \eqref{nu}.}
\\  \hline 
 4&    0.634E-02 &  1.9&    0.387E+00 &  1.0&    0.734E-01 &  1.0 \\
 5&    0.161E-02 &  2.0&    0.194E+00 &  1.0&    0.365E-01 &  1.0 \\
 6&    0.402E-03 &  2.0&    0.974E-01 &  1.0&    0.182E-01 &  1.0 \\
 \hline  
&\multicolumn{6}{c}{ By the $P_1$-$P_1$/$P_{3}^2$ weak Galerkin finite element, $\nu=0.499$ in \eqref{nu}.}
\\  \hline 
 4&    0.319E-02 &  3.5&    0.402E+00 &  2.1&    0.729E-01 &  1.0 \\
 5&    0.637E-03 &  2.3&    0.176E+00 &  1.2&    0.364E-01 &  1.0 \\
 6&    0.153E-03 &  2.1&    0.860E-01 &  1.0&    0.182E-01 &  1.0 \\
  \hline  
\end{tabular} \end{center}  \end{table}

  \begin{table}[H]
  \caption{By the $P_2$ element for \eqref{s-1}  on Figure \ref{f22} grids.} \label{t5}
\begin{center}  
   \begin{tabular}{c|rr|rr|rr}  
 \hline 
$G_i$ &\multicolumn{2}{c|}{$ \|Q_h \b u - \b  u_h \| $   $O(h^r)$ }&
 \multicolumn{2}{c|}{$\| \nabla_w( Q_h \b u- \b u_h )\| $ $O(h^r)$ }
   & \multicolumn{2}{c}{$\|\nabla_w(Q_h p-p_h)\|$ $O(h^r)$} \\ \hline 
&\multicolumn{6}{c}{ By the $P_2$-$P_2$/$P_{4}^2$ weak Galerkin finite element, $\nu=0.25$ in \eqref{nu}.}
\\  \hline 
 4&    0.150E-03 &  3.4&    0.318E-01 &  2.5&    0.358E-02 &  2.1 \\
 5&    0.170E-04 &  3.1&    0.696E-02 &  2.2&    0.886E-03 &  2.0 \\
 6&    0.212E-05 &  3.0&    0.167E-02 &  2.1&    0.222E-03 &  2.0 \\
 \hline  
&\multicolumn{6}{c}{ By the $P_2$-$P_2$/$P_{4}^2$ weak Galerkin finite element, $\nu=0.499$ in \eqref{nu}.}
\\  \hline 
 4&    0.169E-03 &  4.3&    0.318E-01 &  2.6&    0.358E-02 &  2.1 \\
 5&    0.181E-04 &  3.2&    0.675E-02 &  2.2&    0.886E-03 &  2.0 \\
 6&    0.216E-05 &  3.1&    0.160E-02 &  2.1&    0.221E-03 &  2.0 \\
  \hline  
\end{tabular} \end{center}  \end{table}
 
  \begin{table}[H]
  \caption{By the $P_2$ element for \eqref{s-1}  on Figure \ref{f22} grids.} \label{t6}
\begin{center}  
   \begin{tabular}{c|rr|rr|rr}  
 \hline 
$G_i$ &\multicolumn{2}{c|}{$ \|Q_h \b u - \b  u_h \| $   $O(h^r)$ }&
 \multicolumn{2}{c|}{$\| \nabla_w( Q_h \b u- \b u_h )\| $ $O(h^r)$ }
   & \multicolumn{2}{c}{$\|\nabla_w(Q_h p-p_h)\|$ $O(h^r)$} \\ \hline 
&\multicolumn{6}{c}{ By the $P_3$-$P_3$/$P_{5}^2$ weak Galerkin finite element, $\nu=0.25$ in \eqref{nu}.}
\\  \hline 
 2&    0.655E-02 &  4.9&    0.809E+00 &  3.8&    0.148E-01 &  4.2 \\
 3&    0.219E-03 &  4.9&    0.531E-01 &  3.9&    0.122E-02 &  3.6 \\
 4&    0.748E-05 &  4.9&    0.346E-02 &  3.9&    0.127E-03 &  3.3 \\
 \hline  
&\multicolumn{6}{c}{ By the $P_3$-$P_3$/$P_{5}^2$ weak Galerkin finite element, $\nu=0.499$ in \eqref{nu}.}
\\  \hline 
 2&    0.243E+00 &  4.0&    0.349E+01 &  4.4&    0.383E-01 &  4.2 \\
 3&    0.182E-02 &  7.1&    0.135E+00 &  4.7&    0.315E-02 &  3.6 \\
 4&    0.207E-04 &  6.5&    0.875E-02 &  4.0&    0.360E-03 &  3.1 \\
  \hline  
\end{tabular} \end{center}  \end{table}
 
 In the second example, we find the steady-state of
    the Biot's model \eqref{model1}--\eqref{model2} on the unit square domain 
  $\Omega=(0,1)\times(0,1)$, where $\b f=\b 0$, $g=0$ and 
\an{\label{K} K=\begin{cases} 1 & \t{in } (0,\frac 14)\times (0,1), \\
       K_0 & \t{in } [\frac 14,\frac 34]\times (0,1), \\
         1 & \t{in } ( \frac 34, 1)\times (0,1).\end{cases} }
The boundary conditions are 
\a{ & \left\{ \ad {
       \b u&=\p{-\sin (\pi y)\\0} \ \t{on} \ \{1\} \times (0,1),  \\
       \b u&=\b 0 \ \t{on} \ (0,1)\times\{0,1\},  \\
       \partial_{\b n} \b u&=0 \ \t{on} \ \{0\} \times (0,1), } \right. \\
    & \left\{ \ad {
       p&=0 \ \t{on} \ \{1\} \times (0,1),  \\ 
       \partial_{\b n} p&=0 \ \t{on} \ (0,1)\times\{0,1\} \ \t{and} 
                  \ \{0\} \times (0,1). } \right. }
The initial conditions are  
\an{\label{u0} \ad{ \b u&= \p{-\sin (\pi y)\\0} \ \t{and} \ p=0.  } }

We compute the steady state solution for \eqref{u0}
     at $t=1$ by the $P_2$-$P_2$/$P_{4}^2$ weak Galerkin finite element 
  on the fourth nonconvex polygonal grid $G_4$, shown in Figure \ref{f22}.
For $K_0=1$ in \eqref{K}, the computed solution is plotted in Figure \ref{f-s1}.
Near the out-flow boundary, due to $\partial_{\b n} p=0$ at the boundary
     and $\nabla p\approx 0$ nearby, the flow is nearly divergence-free and
   and $(\b u_h)_2$ is quite big.
A similar phenomenon appears at the inflow boundary.

\begin{figure}[H]
 \begin{center}\setlength\unitlength{1.0pt}
\begin{picture}(400,390)(0,0)
  \put(0,-130){\includegraphics[width=400pt]{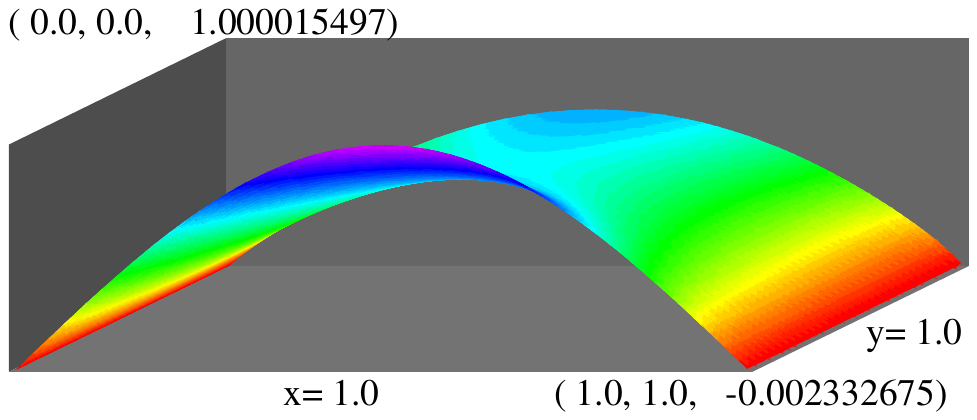}}  
  \put(0,-265){\includegraphics[width=400pt]{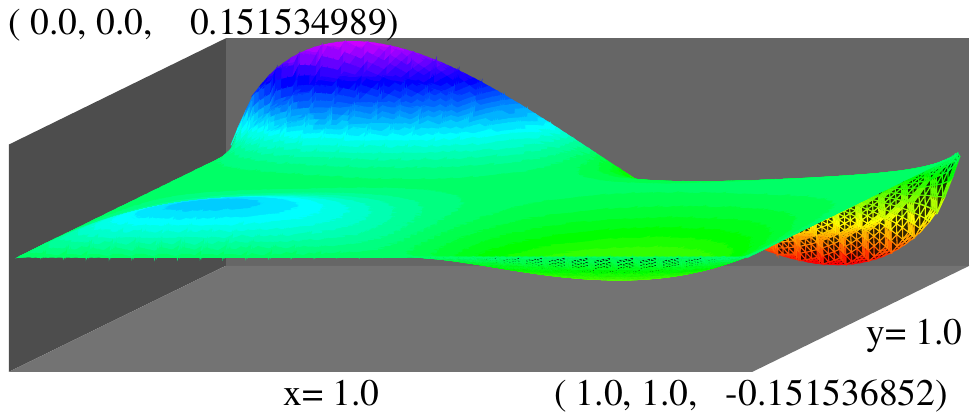}}  
  \put(0,-400){\includegraphics[width=400pt]{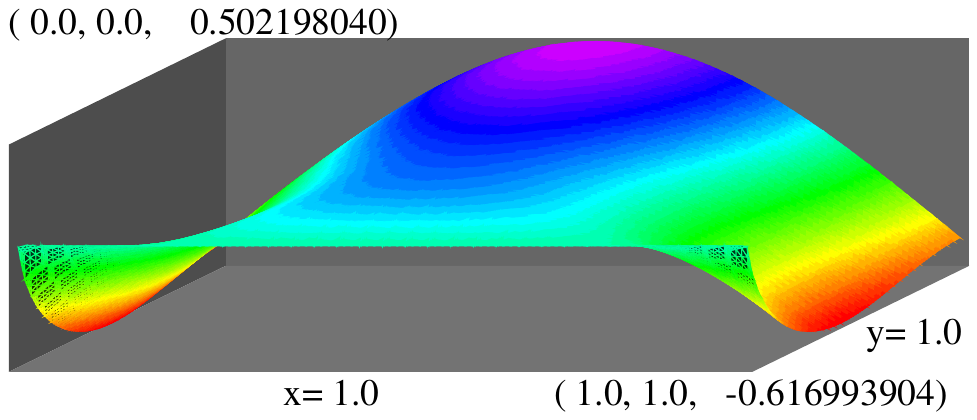}}  
 \end{picture}\end{center}
\caption{The steady state solution for \eqref{u0}, $(\b u_h)_1$(top), $(\b u_h)_2$ 
    and $p_h$, when $K_0=1$ in \eqref{K}.  }\label{f-s1}
\end{figure}
 
 We compute the steady state solution again for \eqref{u0}, with $K_0=10^{-6}$ in \eqref{K} this time,
     at $t=1$ by the $P_2$-$P_2$/$P_{4}^2$ weak Galerkin finite element 
  on the fourth nonconvex polygonal grid $G_4$, shown in Figure \ref{f22}.
From the computed solution plotted in Figure \ref{f-s2}, we have a sharp internal layer for the
  pressure $p$ at the inflow interface $x=\frac 34$.
Due to the discontinuous approximation, we do not have any typical oscillation and interface-smear there.

\begin{figure}[H]
 \begin{center}\setlength\unitlength{1.0pt}
\begin{picture}(400,390)(0,0)
  \put(0,-130){\includegraphics[width=400pt]{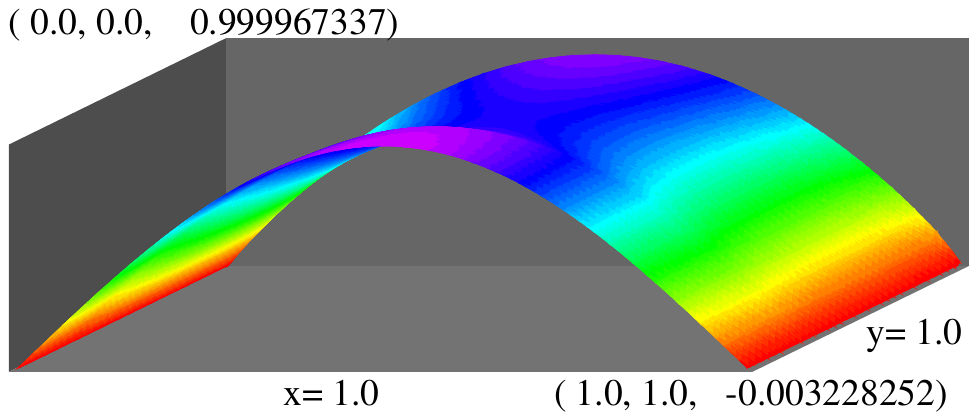}}  
  \put(0,-265){\includegraphics[width=400pt]{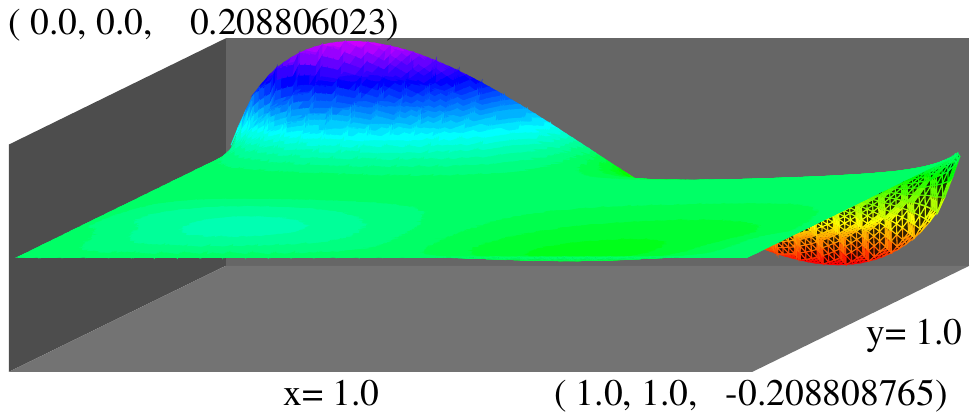}}  
  \put(0,-400){\includegraphics[width=400pt]{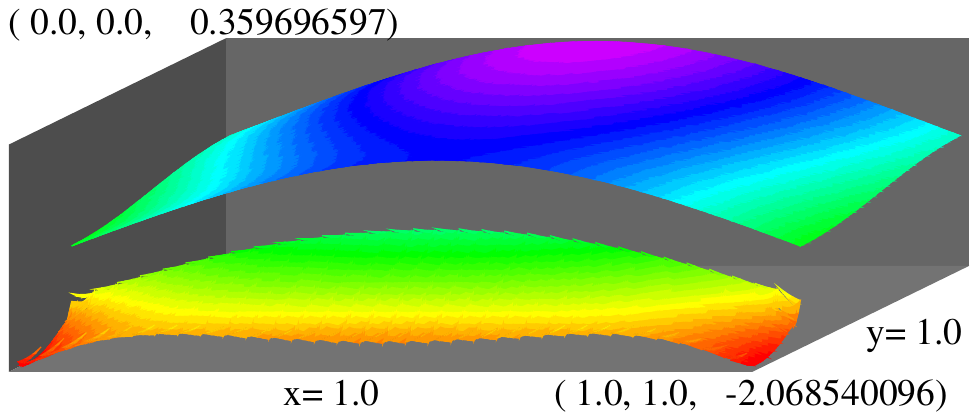}}  
 \end{picture}\end{center}
\caption{The steady state solution for \eqref{u0}, $(\b u_h)_1$(top), $(\b u_h)_2$ 
    and $p_h$, when $K_0=10^{-6}$ in \eqref{K}.  }\label{f-s2}
\end{figure}

\end{document}